\DeclareMathOperator\sign{sgn}
\theoremstyle{definition}
\newtheorem{thm}{Theorem}[section]
\newtheorem{lem}{Lemma}[section]
\newtheorem{rmk}{Remark}[section]
\newtheorem{conj}{Conjecture}[section]
\definecolor{codegreen}{rgb}{0,0.6,0}
\definecolor{codegray}{rgb}{0.5,0.5,0.5}
\definecolor{codepurple}{rgb}{0.58,0,0.82}
\definecolor{backcolour}{rgb}{0.95,0.95,0.92}
\lstdefinestyle{mystyle}{
    backgroundcolor=\color{backcolour},
    commentstyle=\color{codegreen},
    keywordstyle=\color{magenta},
    numberstyle=\tiny\color{codegray},
    stringstyle=\color{codepurple},
    basicstyle=\ttfamily\footnotesize,
    breakatwhitespace=false,         
    breaklines=true,                 
    captionpos=b,                    
    keepspaces=true,                 
    numbers=left,                    
    numbersep=5pt,                  
    showspaces=false,                
    showstringspaces=false,
    showtabs=false,                  
    tabsize=2
}
\title[A Study of the Carathéodory Conjecture]{A Study of the Carathéodory Conjecture through Non-Rotationally Symmetric Surfaces}
\begin{document}

\author[Lucy Cai]{Jiaying Cai}
\address{Phillips Exeter Academy, Exeter, NH, 03833}
\email{jcai1@exeter.edu}

\begin{abstract}
Carathéodory's well-known conjecture states that every sufficiently smooth, closed convex surface in three dimensional Euclidean space admits at least two umbilic points. It has been established that the conjecture is true for all rotationally symmetric surfaces; in this paper, we investigate the umbilic points of two families of surfaces without rotational symmetry, and compute their indices. In particular, we find that the family of surfaces of the form $ax^{2k}+by^{2k}+cz^{2k}=1$ with $a,b,c>0$, $k\in\mathds{Z}_{>1}$ admit 14 umbilic points: six of one known form and eight of another. For many tested values of $a,b,c,k$, such umbilic points have indices $-1/2$ and $1$, respectively. We also explore the dependence of the umbilic points on the parameter $\epsilon$ of the surface $ax^2+\epsilon x^4+ay^2+\epsilon y^4+bz^2=1$. In particular, for both $a<b$ and $a>b,$ there exist exactly two umbilic points with index 1 for $\epsilon$ smaller than certain critical values. For larger $\epsilon,$ surfaces with $a>b$ admit exactly ten umbilic points; for many tested values of $a,b,\epsilon,$ these points have indices 1/2 and -1. For larger $\epsilon,$ surfaces with $a<b$ admit eighteen umbilic points; for many tested values of $a,b,\epsilon,$ these points have indices -1/2 and 1.\\

\noindent\textit{Keywords:} Umbilical point, Carathéodory conjecture, convex surface, index, principal field lines
\end{abstract}

\maketitle
\tableofcontents
\onehalfspacing
\section{Introduction}
Constantin Carathéodory posted his famous conjecture in a 1924 session of the Berlin mathematical society. Since then, it has subsequently appeared in many works and problem lists, notably that of S.T. Yau \cite{MR0645762} (page 684, problem 64). Before we state the conjecture, we briefly recall several concepts.

Consider a point \textit{p} on a closed convex smooth surface \textit{$M$} in $\mathds{R}^3$ (in this paper, we will use ``smooth" to denote $C^{\infty}$). Every plane containing the unit normal vector at \textit{p} defines a \textit{normal curvature}, the maximum and minimum of which are denoted the \textit{principal curvatures} $k_1, k_2$. The directions at which $k_1, k_2$ point are denoted the \textit{principal directions}, and are always orthogonal. At an umbilic point \textit{x}, the principal curvatures are equal, i.e., $k_1=k_2$. In other words, \textit{M} is locally spherical at \textit{x}, and it follows that \textit{x} is a singularity of the principal direction field. The gaussian and mean curvatures are defined as $k_1k_2$ and $\frac{k_1+k_2}{2}$, respectively. In this paper, we use $E,F,G$ to denote the coefficients of the first fundamental form \textit{I} and $e,f,g$ to denote the coefficients of the second fundamental form \textit{II}. The principal curvatures and directions at each point \textit{p} are given by the shape operator \textit{S}, defined in terms of the fundamental form coefficients: 
$$S = (EG-F^2)^{-1}\begin{pmatrix}
eG-fF&fG-gF\\
fE-eF&gE-fF\\
\end{pmatrix}.$$
The index of an umbilic point \textit{x} is defined to be the index of its principal direction field about \textit{x}. See \cite{MR3676571} (15.1, page 153) for a formal definition.

The Poincaré-Hopf index theorem states that the sum of the indices of all  umbilic points on a surface equals its Euler characteristic. All closed convex sufficiently smooth surfaces in $\mathds{R}^3$ have Euler characteristic two; thus, Carathéodory’s conjecture is closely tied to Loewner’s conjecture, which states that every isolated umbilic point has index less than or equal to one. A proof of Loewner’s conjecture, together with the Poincaré-Hopf Theorem, implies the truth of Carathéodory’s conjecture; most attempts at proving Carathéodory’s conjecture take this route. Bol \cite{MR12489} and Hamburger \cite{MR1052, MR0006480, MR0006481} were the first to prove the conjecture for the real analytic case, although doubts were later expressed and the results were reexamined by Klotz \cite{MR0120602}. Ghomi and Howard have written a paper in which they use Mobius inversions to create closed convex smooth and umbilic free surfaces in the complement of one point (and get arbitrarily close to a sphere). Further historical results can be found in \cite{MR2957223}.

The Carathéodory conjecture is nearly one century old, and has resisted numerous attacks even for the real analytic case. To settle the conjecture, one really needs to understand some nontrivial examples. The first type of surfaces are perhaps the simplest non-rotationally symmetric smooth convex surfaces. The second type of surfaces are small perturbations of ellipsoids. These two types of surfaces are all nontrivial examples. It is therefore very natural for us to study them.

It is known that all rotationally symmetric surfaces have at least two umbilic points. See, for example, Hilbert and Cohn-Vossen \cite{MR0046650} (page 203). In this paper, we aim to shed light on the conjecture by exploring the umbilic points of several non-rotationally symmetric surfaces. Umehara and Yamada \cite{MR3676571} (page 163, Example 15.8) discuss the example of the non-rotationally symmetric ellipsoid $ax^2+by^2+cz^2=1$ with $a,b,c$ distinct. This example has four umbilic points with index 1/2. 

In our paper, we generalize this example to surfaces of the form $ax^{2k}+by^{2k}+cz^{2k}=1$. In Section \ref{sec:2}, we compute the number and location of the umbilic points of such surfaces, as well as their indices for several tested values of $a,b,c,$ and $k$. In Section \ref{sec:3}, we explore umbilic points of the surface $ax^2+\epsilon x^4 + ay^2+\epsilon y^4+bz^2=1$ and study how their number and location shift at critical values of $\epsilon$.
\section{A Simple Non-Rotationally Symmetric Surface}
\label{sec:2}
\noindent In this section, we explore the family of surfaces of the form $ax^{2k} + by^{2k} + cz^{2k} = 1$, where $a, b, c>0, k\in\mathds{Z}_{>1}$. We compute the number of umbilic points of such surfaces, as well as their locations and indices.
\begin{thm}
All surfaces of the form $ax^{2k} + by^{2k} + cz^{2k} = 1$, where $a, b, c>0, k\in\mathds{Z}_{>1}$ admit fourteen umbilic points: six of the form $\big\{(\pm a^{-\frac{1}{2k}}, 0, 0), (0, \pm b^{-\frac{1}{2k}}, 0), (0, 0, \pm c^{-\frac{1}{2k}})\big\}$ and eight of the form $\Big\{\left(\pm \left(\frac{bc}{a}\right)^{\frac{1}{2k}}\left(bc+ca+ab\right)^{-\frac{1}{2k}}, \pm\left(\frac{ac}{b}\right)^{\frac{1}{2k}}\left(bc+ca+ab\right)^{-\frac{1}{2k}}, \pm\left(\frac{ab}{c}\right)^{\frac{1}{2k}}\left(bc+ca+ab\right)^{-\frac{1}{2k}}\right)\Big\}.$
\end{thm}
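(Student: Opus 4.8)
The plan is to read off the umbilic points directly from the implicit equation $\Phi(x,y,z)=ax^{2k}+by^{2k}+cz^{2k}-1=0$, for which $\nabla\Phi=2k(ax^{2k-1},by^{2k-1},cz^{2k-1})$ and the Hessian $\mathrm{Hess}\,\Phi=2k(2k-1)\,\mathrm{diag}(ax^{2k-2},by^{2k-2},cz^{2k-2})$ is \emph{diagonal}. For a level set the second fundamental form $II$ is the restriction of $\mathrm{Hess}\,\Phi$ to the tangent plane $T_p=(\nabla\Phi)^\perp$ up to the positive scalar $1/|\nabla\Phi|$, while $I$ is the ambient inner product on $T_p$; hence the umbilic condition $k_1=k_2$ forces $\mathrm{Hess}\,\Phi|_{T_p}$ to be a scalar multiple of the identity. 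So $p$ is umbilic exactly when the symmetric operator $\mathrm{Hess}\,\Phi|_{T_p}$ has a repeated eigenvalue. First I would record this criterion, observe that the surface is invariant under the reflections $x\mapsto-x$, $y\mapsto-y$, $z\mapsto-z$ so that umbilic points occur in symmetric families, and then split the search by the number of vanishing coordinates of $p$ (the origin is not on the surface, so this number is $0$, $1$, or $2$).

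For the six points with two vanishing coordinates, say $(\pm a^{-1/2k},0,0)$, the tangent plane is a coordinate plane on which the two surviving diagonal Hessian entries are $by^{2k-2}$ and $cz^{2k-2}$; evaluated at $y=z=0$ these vanish because $k>1$ gives $2k-2\ge 2$, so $\mathrm{Hess}\,\Phi|_{T_p}=0$ is trivially scalar and the point is a planar umbilic. This is precisely where the hypothesis $k>1$ enters, and it accounts for six umbilic points absent in the ellipsoid case $k=1$. For a point with one vanishing coordinate, say $z=0$ and $x,y\ne 0$, the vector $(0,0,1)$ lies in $T_p$ and is a Hessian eigenvector with eigenvalue $cz^{2k-2}=0$, whereas on the complementary tangent direction, which lies in the $xy$-plane, the Hessian is positive definite; the two eigenvalues are therefore $0$ and a strictly positive number, so no such point is umbilic.

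The decisive case is the octant, where all coordinates are nonzero. Here I would prove the clean linear-algebra fact that, for a diagonal symmetric matrix $H=\mathrm{diag}(h_1,h_2,h_3)$ and a normal $N$ with all components nonzero, $H|_{N^\perp}$ is scalar if and only if $h_1=h_2=h_3$: if the $h_i$ are distinct, the two eigenvalues of $H|_{N^\perp}$ are the roots of $\sum_i n_i^2/(h_i-\mu)=0$, which lie strictly in the open intervals $(h_1,h_2)$ and $(h_2,h_3)$ and so cannot coincide; and if exactly two agree, say $h_1=h_2\ne h_3$, then $(n_2,-n_1,0)$ is a tangent eigenvector of eigenvalue $h_1$ while the remaining eigenvalue equals $h_1$ only when $n_1^2+n_2^2=0$, which is impossible. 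Applying this to $\mathrm{Hess}\,\Phi$, the interior umbilics are precisely the surface points satisfying $ax^{2k-2}=by^{2k-2}=cz^{2k-2}$; solving this together with $ax^{2k}+by^{2k}+cz^{2k}=1$ gives a unique point in the open positive octant, hence eight in all by symmetry, for fourteen umbilic points in total.

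I expect the main obstacle to be twofold. First, establishing the equal-eigenvalue reduction rigorously, and in particular ruling out the exactly-two-equal subcase, is the conceptual heart, since this is what pins the interior umbilics to a single balance condition rather than to a one-parameter locus. Second, one must solve the balance system and compare it with the displayed coordinates: writing $X=ax^{2k}$, $Y=by^{2k}$, $Z=cz^{2k}$ turns the surface equation into the linear constraint $X+Y+Z=1$ and the balance condition into $aX^{k-1}=bY^{k-1}=cZ^{k-1}$, from which $X,Y,Z$ and then $x,y,z$ are determined uniquely. Carrying this reduction through, confirming that there is exactly one positive solution rather than merely finitely many, and comparing the resulting expressions with the displayed closed form is the remaining computational task.
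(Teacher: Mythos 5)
Your route is genuinely different from the paper's and, as a method, it is the cleaner of the two. The paper works in a Monge patch $(u,v,(1-au^{2k}-bv^{2k})^{1/2k})$, computes $E,F,G,e,f,g$ explicitly, and solves $Gf=Fg$, $Ef=eF$, $Ge=Eg$ by hand, recovering the axis points and the remaining ones through coordinate symmetry and a change of patch for the boundary; your level-set formulation avoids the patch-by-patch bookkeeping entirely, and your interlacing lemma (the secular equation $\sum_i n_i^2/(h_i-\mu)=0$ having one root strictly inside each gap, plus the separate two-equal-entries case) gives a genuinely rigorous exclusion of umbilics off the axes and off the balance locus, which the paper only asserts by writing down the solution set of its three equations. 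Your treatment of the six flat axis points, where $\mathrm{Hess}\,\Phi|_{T_p}=0$ because $2k-2\ge 2$, is likewise tidier than the paper's, which reaches them as the $(u,v)=(0,0)$ solution in each patch.

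You should be aware, though, that the final computation you defer will not reproduce the displayed closed form. With $X=ax^{2k}$, $Y=by^{2k}$, $Z=cz^{2k}$, your balance condition $aX^{k-1}=bY^{k-1}=cZ^{k-1}$ together with $X+Y+Z=1$ forces $X=(bc)^{1/(k-1)}/\Sigma$ where $\Sigma=(ab)^{1/(k-1)}+(bc)^{1/(k-1)}+(ca)^{1/(k-1)}$, hence $x=\bigl((bc)^{1/(k-1)}/(a\Sigma)\bigr)^{1/(2k)}$; the theorem's displayed point instead satisfies $ax^{2k}=bc/(ab+bc+ca)$. The two agree exactly when $k=2$ (or $a=b=c$); for $k\ge 3$ the displayed point lies on the surface but violates your (correct) balance condition unless $a^{k-2}=b^{k-2}=c^{k-2}$. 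So your argument establishes the count of fourteen and the six axis points in full generality, but for $k\ge 3$ it corrects rather than confirms the stated octant coordinates. The discrepancy originates in the paper, not in your method: the paper's own proof displays yet a third formula for these points, one that does not even satisfy the surface equation.
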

\noindent For reference, we include two instances of the surface below. Visible umbilic points are highlighted in red.
\begin{figure}[H]
\begin{minipage}{.45\textwidth}
    \centering
    \includegraphics[scale=0.5]{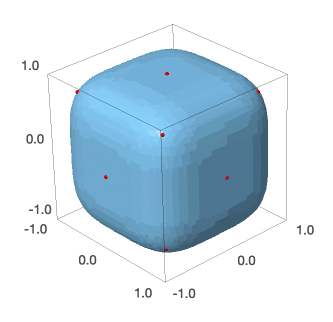}
    \caption{$x^4+y^4+z^4=1$}
\end{minipage}
\begin{minipage}{.45\textwidth}
    \centering
    \includegraphics[scale=0.5]{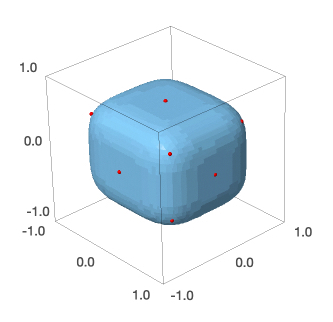}
    \caption{$2x^4+3y^4+5z^4=1$}
\end{minipage}
\end{figure}
\begin{proof}
We split the proof into two steps. In the first, we prove that the surface satisfies the given requirements. In the second, we prove the number and location of the umbilic points. In an additional third step, we illustrate the indices of the umbilic points for several instances of the surface and conjecture as to its general form.

\noindent \textbf{Step 1.} First, we prove all such surfaces are closed, convex, and smooth. To do so, we first prove all such surfaces are regular. It is possible to express all such surfaces as $f^{-1}(0)$, where $$f(x, y, z) = ax^{2k} + by^{2k} + cz^{2k} -1.$$ Note that $f$ is infinitely differentiable. Also, $0$ is a regular value of $f$, since its partial derivatives $f_x = 2akx^{2k-1}$, $f_y = 2bky^{2k-1}$, $f_z = 2ckz^{2k-1}$ only vanish simultaneously at $(0, 0, 0)$, which is not contained in $f^{-1}(0)$. It follows from the inverse function theorem \cite{MR3837152} that all such surfaces are regular and smooth. 

In order to check for convexity, we first compute the first and second fundamental coefficients. We parameterize the surface as $\mathcal{S} = (u, v,(1-au^{2k}-bv^{2k})^{\frac{1}{2k}}),$ which covers the top half strictly above $z=0$. Refer to appendix \hyperlink{appendix:1}{[1]} for detailed calculations of the fundamental form coefficients, which exist as long as $1-au^{2k}-bv^{2k}\neq 0$ and $a^2u^{4k-2} + b^2v^{4k-2} + c^{\frac{1}{k}}(-au^{2k} - bv^{2k} + 1)^{2-\frac{1}{k}} \neq 0$. The first equation is the boundary of our parameterization: we do not need to worry about this since it is possible to switch $a$, $b$, $c$ to check for umbilic points along this boundary. The second equation has no solutions (since the left side is the sum of square roots that cannot simultaneously be 0).

Now, to prove all such surfaces are convex, we introduce a lemma.
\begin{lem}[Convexity]
    A closed surface in $\mathds{R}^3$ is convex if and only if its Gaussian curvature is nonnegative everywhere.
\end{lem}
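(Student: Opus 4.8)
The plan is to prove the two implications separately, treating the forward direction as a local computation and the reverse direction as a global statement that requires the Gauss map.

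For the easy direction, I would assume $M$ is convex, meaning it bounds a convex body and hence lies in the closed half-space determined by each of its tangent planes. Fixing $p\in M$ and orienting so that the unit normal $N(p)$ points into the body, the fact that $M$ lies on one side of $T_pM$ means the signed height of $M$ above $T_pM$ has a local extremum at $p$, so its Hessian --- which is precisely the second fundamental form $II_p$ --- is semidefinite. Hence the principal curvatures $k_1,k_2$, being the eigenvalues of the shape operator $S$, share the same sign (or vanish), and therefore $K=k_1k_2\ge 0$. This disposes of the ``only if'' direction with no global input.

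For the converse, suppose $K\ge 0$ everywhere on the connected closed surface $M$. I would first record two consequences. Since $M$ is compact, for every unit vector $v$ the height function $h_v(p)=\langle p,v\rangle$ attains a maximum, at which $T_pM\perp v$; thus the Gauss map $N:M\to S^2$ is surjective. Moreover $K=\det(dN)\ge 0$ means $N$ is orientation-preserving wherever it is a local diffeomorphism. The cleanest case is $K>0$ everywhere: then $dN$ is invertible at every point, so $N$ is a local diffeomorphism, and a local diffeomorphism from a compact surface onto the connected, simply connected $S^2$ is a single-sheeted covering map, i.e.\ a global diffeomorphism. Bijectivity of $N$ then forces each tangent plane to be a genuine support plane --- for direction $v$ there is exactly one point with outward normal $v$, and near it $M$ lies strictly on one side --- so $M$ is the boundary of a convex body.

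The hard part is the general case $K\ge 0$, where $N$ may fail to be a local diffeomorphism at the zeros of $K$. To handle this I would argue directly from local convexity: at each point $II$ is semidefinite, so $M$ lies locally on one side of $T_pM$, and I would invoke the standard global-from-local theorem, going back to Hadamard and van Heijenoort, that a compact connected surface in $\mathds{R}^3$ whose second fundamental form is everywhere semidefinite bounds a convex body. A useful auxiliary fact is that Gauss--Bonnet gives $\int_M K\,dA=2\pi\chi(M)>0$, so $K$ is strictly positive somewhere; combined with connectedness this rules out the degenerate flat possibilities and anchors the support-plane argument. I expect this passage from pointwise, local convexity to global convexity to be the main obstacle, since the pointwise sign of $K$ is only a second-order condition and does not by itself detect the global embedding; the compactness of $M$ and the simple connectivity of $S^2$ are both essential to close the gap.
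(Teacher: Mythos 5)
Your proposal is essentially correct, but it takes a much more explicit route than the paper, which disposes of the entire lemma by a one-line citation to the Chern--Lashof theorem (do Carmo, p.~387, Remark 3). You add real content in two places: the forward direction (convex $\Rightarrow K\ge 0$) via the support-plane/Hessian argument, and the strictly convex case $K>0$ via the Gauss map being a proper local diffeomorphism onto the simply connected $S^2$, hence a global diffeomorphism. Both of these are sound and standard. For the genuinely hard degenerate case $K\ge 0$ you end up invoking the same Hadamard--van Heijenoort/Chern--Lashof-type theorem that the paper cites, so the two proofs ultimately rest on the same external input; yours simply isolates what is elementary from what is deep. One caveat: your intermediate assertion that semidefiniteness of $II_p$ at a point forces $M$ to lie locally on one side of $T_pM$ is false as stated --- the graph of $z=x^2-y^4$ has positive semidefinite $II$ at the origin yet crosses its tangent plane there --- and indeed even $K\ge0$ on a whole neighborhood does not give local convexity without global hypotheses. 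This does not break your argument, since you do not actually use that local claim (the global theorem you cite carries the load), but you should either delete it or phrase it as motivation rather than as a step. You correctly identify the local-to-global passage as the main obstacle; that obstacle is precisely what the cited theorem resolves, so neither your write-up nor the paper's is self-contained on this point.
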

\begin{proof}
    The proof of this lemma is due to the Chern-Lashof Theorem; see \cite{MR3837152} (p. 387, Remark 3). 
\end{proof}
\noindent We compute the Gaussian curvature $K,$ which is given by
$$K = \frac{\det(II)}{\det(I)}= \frac{eg-f^2}{EG-F^2}.$$
By symmetry, we only need to consider the open parameterization, so that $au^{2k}+bv^{2k}<1.$ Note that since $\sqrt{c^{\frac{1}{k}}(1-au^{2k}-bv^{2k})^{2-\frac{1}{k}} + a^2u^{4k-2} + b^2v^{4k-2}}>0$,
\begin{align*}
    \sign(eg-f^2)&=\sign(abu^{2k-2}v^{2k-2}(2k-1)^2(1-au^{2k})(1-bv^{2k})-a^2b^2(2k-1)^2u^{4k-2}v^{4k-2}) \\
    &=\sign(abu^{2k-2}v^{2k-2}(2k-1)^2((1-au^{2k})(1-bv^{2k})-abu^{2k}v^{2k})) \\
    &=\sign(abu^{2k-2}v^{2k-2}(2k-1)^2(1-au^{2k}-bv^{2k})) \\
    &\geq0.
\end{align*}
Similarly, since $c^{\frac{1}{k}}(-au^{2k}-bv^{2k}+1)^{2-\frac{1}{k}}\geq0$,
\begin{align*}
    \sign(EG-F^2)
    &= \sign(c^{\frac{1}{k}}(-au^{2k}-bv^{2k}+1)^{2-\frac{1}{k}}(a^2u^{4k-2}+b^2v^{4k-2}+c^{\frac{1}{k}}(-au^{2k}-bv^{2k}+1)^{2-\frac{1}{k}}))=1.
\end{align*}
So, $\sign(K)\geq0$ and all such surfaces are convex.

\noindent \textbf{Step 2.} We now compute the number of umbilic points of all such surfaces. To do so, we first introduce a lemma. \begin{lem}[Weingarten matrix]
The umbilic points of a surface occur precisely where the Weingarten matrix $$ C = A^{-1}B = \begin{pmatrix}
E&F\\
F&G\\
\end{pmatrix}^{-1}\begin{pmatrix}
e&f\\
f&g\\
\end{pmatrix}$$ is a scalar multiple of the identity matrix.
\end{lem}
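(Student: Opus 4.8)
The plan is to identify the Weingarten matrix $C = A^{-1}B$ with the shape operator $S$ recorded in the introduction; a direct multiplication confirms $A^{-1}B = S$, so the principal curvatures $k_1, k_2$ are exactly the eigenvalues of $C$. By definition an umbilic point is a point where $k_1 = k_2$, so the lemma reduces to the purely linear-algebraic claim that the two eigenvalues of $C$ coincide if and only if $C$ is a scalar multiple of the identity. I would phrase the whole argument at a fixed point $p$ and work with $C$ acting on the tangent plane $T_pM$.

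One direction is immediate: if $C = \lambda I$ for some scalar $\lambda$, then both eigenvalues equal $\lambda$, so $k_1 = k_2 = \lambda$ and $p$ is umbilic. The reverse implication carries the real content. It is tempting to conclude directly that a repeated eigenvalue forces $C = \lambda I$, but this is false for an arbitrary $2\times 2$ matrix, since a nontrivial Jordan block has a repeated eigenvalue yet is not scalar. Hence the crux of the proof, and the step I expect to be the main obstacle, is to show that $C$ is always diagonalizable, so that a repeated eigenvalue genuinely forces $C$ to act as $\lambda I$ on the entire tangent plane rather than degenerating into a non-diagonalizable form.

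To establish diagonalizability I would exploit the fact that, although the matrix $C = A^{-1}B$ need not itself be symmetric, it is self-adjoint with respect to the inner product $\langle u, w\rangle_A := u^{\top}Aw$ determined by the first fundamental form. Using the symmetry of both $A$ and $B$, one computes $\langle Cu, w\rangle_A = (A^{-1}Bu)^{\top} A w = u^{\top} B w = \langle u, Cw\rangle_A$, which is manifestly symmetric in $u$ and $w$. Since $A$ is symmetric positive definite on a regular surface (because $E>0$ and $EG-F^2>0$), the form $\langle\cdot,\cdot\rangle_A$ is a genuine inner product, and the real spectral theorem then guarantees that $C$ is orthogonally diagonalizable with respect to it.

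With diagonalizability in hand the argument closes quickly. At an umbilic point the repeated eigenvalue $\lambda = k_1 = k_2$ has a two-dimensional eigenspace, which must therefore be all of $T_pM$, so $Cu = \lambda u$ for every tangent vector $u$; this means $C = \lambda I$. The difficulty is thus conceptual rather than computational: one must resist reading the conclusion off from the mere coincidence of eigenvalues and instead justify, via self-adjointness relative to $A$, that the shape operator can never fail to be diagonalizable.
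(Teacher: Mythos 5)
Your proposal is correct, but it is doing something the paper does not attempt: the paper's ``proof'' of this lemma is a one-line citation to Umehara--Yamada (Proposition 9.6), so you are supplying a self-contained argument where the paper defers entirely to the literature. Your identification of $C=A^{-1}B$ with the shape operator $S$ from the introduction checks out ($A^{-1}B$ has entries $\tfrac{Ge-Ff}{EG-F^2}$, etc., exactly as the paper later records), and you correctly isolate the one nontrivial point, namely that a repeated eigenvalue of a $2\times 2$ matrix does not by itself force the matrix to be scalar. Your resolution --- that $C$ is self-adjoint for the inner product $\langle u,w\rangle_A=u^{\top}Aw$, since $\langle Cu,w\rangle_A=u^{\top}B^{\top}A^{-1}Aw=u^{\top}Bw$ is symmetric in $u,w$ by symmetry of $A$ and $B$, with $A$ positive definite on a regular patch because $E>0$ and $EG-F^2>0$ --- is exactly the standard argument the cited reference formalizes, and it closes the diagonalizability gap cleanly. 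What your route buys is transparency about why the lemma cannot fail via a Jordan block; what the paper's route buys is brevity, since the authors only need the statement as a computational criterion (equations $C[0][1]=C[1][0]=0$ and $C[0][0]=C[1][1]$) for locating umbilics. No gap to report.
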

\begin{proof}
This is well-known; see \cite{MR3676571} (Proposition 9.6, page 94). 
\end{proof} 
\noindent We then solve the following equations manually:
\begin{align}
    &C[0][1] = C[1][0] = 0 \\
    &C[0][0] = C[1][1]
\end{align}
To verify that $C$ exists, we check that $EG-F^2\neq 0$. This quantity is equal to 0 only when
\begin{align*}
    &(a^2u^{4k-2}+c^{\frac{1}{k}}(-au^{2k}-bv^{2k}+1)^{2-\frac{1}{k}})(b^2v^{4k-2}+c^{\frac{1}{k}}(-au^{2k}-bv^{2k}+1)^{2-\frac{1}{k}})-(abu^{2k-1}v^{2k-1})^2=0 \Rightarrow \\
    &c^{\frac{1}{k}}(-au^{2k}-bv^{2k}+1)^{2-\frac{1}{k}}(a^2u^{4k-2}+b^2v^{4k-2}+c^{\frac{1}{k}}(-au^{2k}-bv^{2k}+1)^{2-\frac{1}{k}})=0 \Rightarrow \\ 
    &1-au^{2k}-bv^{2k}=0 \hspace{0.5cm} \textrm{or} \hspace{0.5cm} a^2u^{4k-2}+b^2v^{4k-2}+c^{\frac{1}{k}}(-au^{2k}-bv^{2k}+1)^{2-\frac{1}{k}}=0.
\end{align*}
Again, we do not need to worry about these two equations.
Now, we expand and simplify $C$ to get
$$C[0][0] = \frac{Ge-Ff}{EG-F^2}, \hspace{0.4cm} C[0][1] = \frac{Gf-Fg}{EG-F^2}, \hspace{0.4cm} C[1][0] = \frac{Ef-eF}{EG-F^2}, \hspace{0.4cm} C[1][1] = \frac{Eg-Ff}{EG-F^2}.$$
Equation (1) becomes $Gf=Fg$ and $Ef=eF$. We rearrange and simplify to get
\begin{align*}
    &Gf=Fg \Rightarrow abu^{2k-1}v^{2k-1}(b^2v^{4k-2}+c^{\frac{1}{k}}(-au^{2k}-bv^{2k}+1)^{2-\frac{1}{k}} - bv^{2k-2}(-au^{2k}+1)) = 0\Rightarrow \\ 
    &abu^{2k-1}v^{2k-1}(c^{\frac{1}{k}}(-au^{2k}-bv^{2k}+1)^{2-\frac{1}{k}} - bv^{2k-2}(-au^{2k}-bv^{2k}+1))=0\Rightarrow \\
    &u^{2k-1}v^{2k-1}(-au^{2k}-bv^{2k}+1)(c^{\frac{1}{k}}(-au^{2k}-bv^{2k}+1)^{1-\frac{1}{k}}-bv^{2k-2})=0.
\end{align*}
Similarly, $Ef=eF$ becomes
$$u^{2k-1}v^{2k-1}(-au^{2k}-bv^{2k}+1)(c^{\frac{1}{k}}(-au^{2k}-bv^{2k}+1)^{1-\frac{1}{k}}-au^{2k-2})=0.$$
Equation (2) becomes $Ge=Eg$. We get
\begin{align*}
    &(b^2v^{4k-2}+c^{\frac{1}{k}}(-au^{2k}-bv^{2k}+1)^{2-\frac{1}{k}})au^{2k-2}(-bv^{2k}+1)\\&=(a^2u^{4k-2}+c^{\frac{1}{k}}(-au^{2k}-bv^{2k}+1)^{2-\frac{1}{k}})bv^{2k-2}(-au^{2k}+1).
\end{align*}
Simultaneously solving these equations gives the solutions
\begin{align*}
    \left(u,v\right) = \hspace{0.2cm} \left(0,0\right), \hspace{0.2cm} \tfrac{(bc)^{\frac{1}{k-1}}}{(ab)^{\frac{1}{k-1}}+(bc)^{\frac{1}{k-1}}+(ca)^{\frac{1}{k-1}}}\left(\pm a^{-\frac{1}{2k}},\pm b^{-\frac{1}{2k}}\right).
\end{align*}
Taking advantage of symmetry gives the following general form for the umbilic points:
\begin{align*}
    (x, y, z) = \> &\Big(\pm a^{-\frac{1}{2k}}, 0, 0\Big), \hspace{0.2cm} \Big(0, \pm b^{-\frac{1}{2k}}, 0\Big), \hspace{0.2cm} \Big(0, 0, \pm c^{-\frac{1}{2k}}\Big),  \\
    &\tfrac{(bc)^{\frac{1}{k-1}}}{(ab)^{\frac{1}{k-1}}+(bc)^{\frac{1}{k-1}}+(ca)^{\frac{1}{k-1}}}\left(\pm a^{-\frac{1}{2k}},\pm b^{-\frac{1}{2k}},\pm c^{-\frac{1}{2k}}\right).
\end{align*}
\end{proof}

\textbf{Index of Umbilics.}
Now, to find the indices of the umbilic points, we study the shape of the principal direction field near singularities.
\begin{lem}[Lines of curvature] \label{lem:1.3}
The equation for lines of curvature can be written as $$\begin{vmatrix}
(v')^2&-u'v'&(u')^2\\
E&F&G\\
e&f&g\\
\end{vmatrix}=0.$$
Expanding gives an alternate form:
$$(fE-eF)(u')^2+(gE-eG)u'v'+(gF-fG)(v')^2=0.$$
\end{lem}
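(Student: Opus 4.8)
The plan is to characterize a line of curvature as an integral curve of the principal direction field and then to translate the principal-direction (eigenvector) condition into the stated determinant. Recall from the introduction that the principal directions at a point are the eigenvectors of the shape operator $S$, with the principal curvatures as the corresponding eigenvalues. Hence a smooth curve $(u(t),v(t))$ on the surface is a line of curvature exactly when, at each point, its velocity $\mathbf{w}=(u',v')^\top$ is an eigenvector of $S$, i.e.\ $S\mathbf{w}=\kappa\mathbf{w}$ for some scalar $\kappa$ (the associated principal curvature). Writing $I=\begin{pmatrix}E&F\\F&G\end{pmatrix}$, $II=\begin{pmatrix}e&f\\f&g\end{pmatrix}$, and using $S=I^{-1}II$ from the Weingarten matrix lemma, this eigenvalue equation is equivalent to $II\,\mathbf{w}=\kappa\,I\,\mathbf{w}$.

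First I would eliminate the unknown scalar $\kappa$. The identity $II\,\mathbf{w}=\kappa\,I\,\mathbf{w}$ states precisely that the two tangent vectors $I\mathbf{w}=(Eu'+Fv',\,Fu'+Gv')^\top$ and $II\mathbf{w}=(eu'+fv',\,fu'+gv')^\top$ are linearly dependent. For vectors in $\mathds{R}^2$, linear dependence is equivalent to the vanishing of the determinant of their coordinates; imposing this gives
\begin{equation*}
(Eu'+Fv')(fu'+gv')-(Fu'+Gv')(eu'+fv')=0.
\end{equation*}
Expanding and collecting the monomials $(u')^2$, $u'v'$, $(v')^2$ then yields $(fE-eF)(u')^2+(gE-eG)u'v'+(gF-fG)(v')^2=0$, which is the alternate form claimed in the statement. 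Any overall sign arising from the order of the columns is immaterial, since the expression is set to zero.

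Finally I would confirm that this quadratic is identical to the advertised $3\times3$ determinant. Expanding $\begin{vmatrix}(v')^2&-u'v'&(u')^2\\ E&F&G\\ e&f&g\end{vmatrix}$ along its first row gives $(v')^2(Fg-Gf)+u'v'(Eg-Ge)+(u')^2(Ef-Fe)$; regrouping by $(u')^2$, $u'v'$, and $(v')^2$ reproduces exactly the coefficients $fE-eF$, $gE-eG$, and $gF-fG$. Thus the determinant equation and the quadratic equation coincide term by term, which proves the lemma.

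The argument is entirely elementary, so the difficulty is not conceptual but lies in careful sign bookkeeping across the two determinant expansions and in justifying the elimination step cleanly. The one subtlety worth flagging is the umbilic case: there $e/E=f/F=g/G$, so every tangent direction is principal and the quadratic degenerates to the identity $0=0$, consistent with umbilics being the singular points of the principal direction field where the lines-of-curvature equation carries no directional information.
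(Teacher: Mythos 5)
Your derivation is correct: eliminating $\kappa$ from $II\,\mathbf{w}=\kappa\,I\,\mathbf{w}$ via the $2\times2$ linear-dependence determinant (valid because $I$ is positive definite on a regular surface, so $I\mathbf{w}\neq 0$) gives exactly the quadratic in $u',v'$, and your cofactor expansion of the $3\times3$ determinant matches it term by term. The paper offers no argument of its own — it simply cites do Carmo, Section 3-3 — and your proof is precisely the standard derivation found there, so the approaches coincide.
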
 
\begin{proof}
See \cite{MR3837152} (section 3-3, page 161). 
\end{proof}
We compute the quantities $fE-eF$, $gE-eG$, $gF-fG$. Dividing each by $2k-1$ gives
$$fE-eF = (a^2u^{4k-2}+c^{\frac{1}{k}}(-au^{2k}-bv^{2k}+1)^{2-\frac{1}{k}})abu^{2k-1}v^{2k-1} -  a^2bu^{4k-3}v^{2k-1}(-bv^{2k}+1),$$
\begin{multline*}
    gE-eG = bv^{2k-2}(-au^{2k}+1)(a^2u^{4k-2}+c^{\frac{1}{k}}(-au^{2k}-bv^{2k}+1)^{2-\frac{1}{k}})\\-au^{2k-2}(-bv^{2k}+1)(b^2v^{4k-2}+c^{\frac{1}{k}}(-au^{2k}-bv^{2k}+1)^{2-\frac{1}{k}}),
\end{multline*}
$$gF-fG = (-au^{2k}+1)(ab^2u^{2k-1}v^{4k-3})-abu^{2k-1}v^{2k-1}(b^2v^{4k-2}+c^{\frac{1}{k}}(-au^{2k}-bv^{2k}+1)^{2-\frac{1}{k}}).$$
We now divide both sides by $(v')^2$. Viewing $u$ as a function of $v$, we plug the differential equation into the computer algebra software Mathematica using its numerical differential equation solving method \href{https://reference.wolfram.com/language/ref/NDSolve.html}{NDSolve}. In the following example, we set $a=1,b=1,c=1,k=2.$ Refer to appendix \hyperlink{appendix:2}{[2]} for relevant code and see Figures \ref{output:1}, \ref{output:2}, \ref{output:3} for resulting graphs given initial conditions.
\begin{figure}[H]
\begin{minipage}{.32\textwidth}
    \centering
    \includegraphics[scale=0.35]{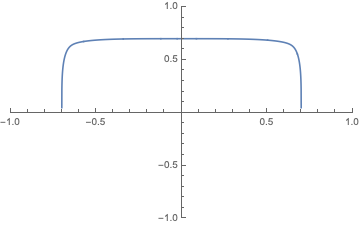}
    \caption{\\$(v,u)=(0,0.7)$ \\ $\{v,-0.7,0.7\}$}
    \label{output:1}
\end{minipage}
\begin{minipage}{.34\textwidth}
    \centering
    \includegraphics[scale=0.35]{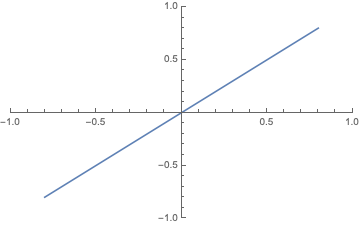}
    \caption{\\$(v,u)=(0.8,0.8)$ \\ $\{v,-0.8,0.8\}$}
    \label{output:2}
\end{minipage}
\begin{minipage}{.32\textwidth}
    \centering
    \includegraphics[scale=0.35]{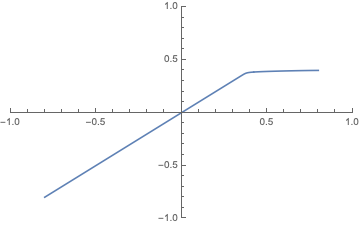}
    \caption{\\$(v,u)=(0.8,0.4)$ \\ $\{v,-0.8,0.8\}$}
    \label{output:3}
\end{minipage}
\end{figure}
\noindent Overlaying these graphs with other solution curves gives an image of the lines of curvature near $(0,0,1)$ on the surface $x^4+y^4+z^4=1$. We can test a variety of surfaces by changing the input values for $a,b,c,k$: refer to appendix \hyperlink{appendix:3}{[3]} for results. In particular, we see that setting $a=b=1, k=2$ and testing values of $c$ up to 100 gives the same approximate principal field lines. 
In all of the demonstrated cases, the shape of the lines of curvature remain the same: thus, the indices of the umbilic points remain the same, and are independent of $a,b,c,$ or $k$. To this end, we offer the following conjecture.

\begin{conj} \label{conj:1.1}
The index of the umbilic points on the surface $ax^{2k}+by^{2k}+cz^{2k}=1$ for $a,b,c>0, k\in\mathds{Z}_{>1}$ are independent of $a,b,c,$ and $k$.
\end{conj}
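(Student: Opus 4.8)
The plan is to prove the stronger statement that each umbilic's index is a winding number whose value cannot move with the parameters. By Lemma \ref{lem:1.3} a principal direction $(\cos\theta,\sin\theta)$ satisfies $\mathcal L\cos^2\theta+\mathcal M\cos\theta\sin\theta+\mathcal N\sin^2\theta=0$, where $\mathcal L=fE-eF$, $\mathcal M=gE-eG$, $\mathcal N=gF-fG$ are the three coefficients computed above; using the double-angle identities this is $(\mathcal L+\mathcal N)+(\mathcal L-\mathcal N)\cos2\theta+\mathcal M\sin2\theta=0$. The index of an isolated umbilic is $\frac1{2\pi}\oint_\gamma d\theta$ around a small encircling loop $\gamma$, so once the leading behaviour of $(\mathcal L-\mathcal N,\ \mathcal M,\ \mathcal L+\mathcal N)$ near the umbilic is known, the index is read off as one half of the winding number of a planar curve. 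Since there are only two symmetry classes, it suffices to treat the axis point $(u,v)=(0,0)$ and one off-axis point.

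For the axis umbilic the computation is clean, and I expect to obtain the full $(a,b,c,k)$-independence here. From the expressions above, near $(0,0)$ the lowest-order parts are $\mathcal L\sim abc^{1/k}u^{2k-1}v^{2k-1}$, $\mathcal N\sim-abc^{1/k}u^{2k-1}v^{2k-1}$, and $\mathcal M\sim c^{1/k}(bv^{2k-2}-au^{2k-2})$, so $\mathcal L+\mathcal N$ vanishes to strictly higher order than $\mathcal M$ and $\mathcal L-\mathcal N$. To leading order the relation collapses to $\tan2\theta=(\mathcal N-\mathcal L)/\mathcal M$; since $\mathcal N-\mathcal L$ agrees in sign with $2\mathcal N$, the angle $2\theta$ is the argument of the plane curve $\psi\mapsto(\mathcal M,\mathcal N)$ obtained by restricting to a small circle $u=r\cos\psi$, $v=r\sin\psi$, and the index equals one half of its winding number $W$. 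The decisive point is that the \emph{sign pattern} of this curve is independent of $k$: the first coordinate $\mathcal M\sim c^{1/k}(b\sin^{2k-2}\psi-a\cos^{2k-2}\psi)\,r^{2k-2}$ has the sign of $b|\sin\psi|^{2k-2}-a|\cos\psi|^{2k-2}$ because $2k-2$ is even, while the second coordinate $\mathcal N\sim-abc^{1/k}(\tfrac12\sin2\psi)^{2k-1}r^{4k-2}$ has the sign of $-\sin2\psi$ because $2k-1$ is odd. Hence as $\psi$ runs from $0$ to $2\pi$ the curve passes through the quadrants in the fixed cyclic order (third, fourth, first, second) twice, giving $W=2$ and index $1$ for every admissible $a,b,c,k$; the four sign changes of $\mathcal M$ persist because $b\tan^{2k-2}\psi=a$ always has exactly four roots on the circle when $2k-2$ is even.

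For the eight off-axis umbilics I would run the same reduction locally. At such a point $p_0=(u_0,v_0)$ with $u_0,v_0\ne0$ all three coefficients vanish, but now their linear parts $\mathcal L_1,\mathcal M_1,\mathcal N_1$ in $(u-u_0,v-v_0)$ are of the same order, so $\mathcal L+\mathcal N$ no longer cancels to leading order. The direction field is then that of a generic (Darbouxian) umbilic, whose index is one half of the winding number of the linear map $(u-u_0,v-v_0)\mapsto(\mathcal M_1,\ \mathcal N_1-\mathcal L_1)$, namely $\pm\tfrac12$ according to the sign of that map's determinant. To close the conjecture one must expand $\mathcal L,\mathcal M,\mathcal N$ to first order at the explicit off-axis point from Step 2, check that the linear parts are nondegenerate for all $k\in\mathds{Z}_{>1}$, and verify that the sign of the determinant, and hence the index (expected to be $-\tfrac12$), does not depend on $a,b,c,k$.

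The dependence on $(a,b,c)$ alone is comparatively soft: $(0,\infty)^3$ is connected, the fourteen umbilics vary continuously and stay distinct (the off-axis coordinates never vanish), so no umbilic crosses a fixed encircling loop; the three coefficients are then continuous and not simultaneously zero on the loop throughout the deformation, and the winding number, being integer-valued, cannot jump. The genuine obstacle is twofold and lives in the $k$-direction, where homotopy is unavailable. First, one must carry out the messy first-order expansion at the off-axis point and pin down the sign of its determinant uniformly in $k$. Second, the axis computation must be made fully rigorous: there $\mathcal M$ and $\mathcal N$ vanish to the different orders $2k-2$ and $4k-2$, so the passage from the full coefficients to their leading parts should be justified by a weighted blow-up of the binary differential equation in the style of García and Sotomayor rather than by a naive perturbation estimate. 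As a global check, the two indices must satisfy the Poincaré--Hopf relation $6\cdot1+8\cdot(-\tfrac12)=2=\chi(S^2)$, which is consistent with the values above and independent of all parameters.
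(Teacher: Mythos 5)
The first thing to note is that the paper does not prove this statement: Conjecture \ref{conj:1.1} is left open there, supported only by NDSolve plots of the lines of curvature for finitely many parameter values, so there is no proof in the paper to compare you against, and your winding-number framework is a genuinely different (and the correct) line of attack. Your local analysis at the six axis points is essentially right: up to the positive factor $2k-1$, the lowest-order parts of $\mathcal M=gE-eG$ and of $\mathcal N-\mathcal L$ are $c^{1/k}(bv^{2k-2}-au^{2k-2})$ and $-2abc^{1/k}u^{2k-1}v^{2k-1}$, the two never vanish simultaneously on a small circle, the quadrant sequence is as you describe, and $W=2$, index $1$, follows. Two supporting points can be tightened. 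The worry about $\mathcal L+\mathcal N$ is unnecessary everywhere, not just at the axis points: from the identity $(\mathcal L-\mathcal N)^2+\mathcal M^2-(\mathcal L+\mathcal N)^2=\mathcal M^2-4\mathcal L\mathcal N$, which is the discriminant of the binary equation and hence strictly positive off the umbilic set, the index of \emph{any} isolated umbilic equals $\tfrac12 W(\mathcal M,\,\mathcal N-\mathcal L)$. What does still need an actual argument is the replacement of $(\mathcal M,\mathcal N-\mathcal L)$ by their leading parts, since these vanish to the different orders $2k-2$ and $4k-2$; your proposed quasi-homogeneous blow-up is the right tool, but as written this step is announced rather than carried out.

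The genuine gap is the eight off-axis umbilics, where you defer the entire first-order expansion; without it the conjecture is not established. You are, however, missing a shortcut that closes this gap with no computation at all. The maps $(x,y,z)\mapsto(\pm x,\pm y,\pm z)$ are isometries of $\mathds{R}^3$ preserving the surface and its principal foliations, they permute the eight off-axis umbilics transitively, and the index of an isolated singularity of a line field is a diffeomorphism invariant; hence all eight share a common index $\iota$. Granting the theorem of Section \ref{sec:2} (exactly fourteen umbilics) and the axis computation (which applies to all three coordinate axes after permuting the roles of $a,b,c$, since it uses only $a,b,c>0$), Poincar\'e--Hopf forces $6\cdot 1+8\iota=2$, i.e.\ $\iota=-\tfrac12$, uniformly in $a,b,c,k$. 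The whole conjecture therefore reduces to making the axis-point blow-up rigorous. Note finally that your assignment --- index $1$ at the six axis points and $-\tfrac12$ at the eight others --- is the only one compatible with Poincar\'e--Hopf; the paper's abstract lists the two values in the transposed order.
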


\noindent At the umbilic points, the vector fields look approximately as follows:
\begin{figure}[H]
\begin{minipage}{.32\textwidth}
    \centering
    \includegraphics[scale=0.2]{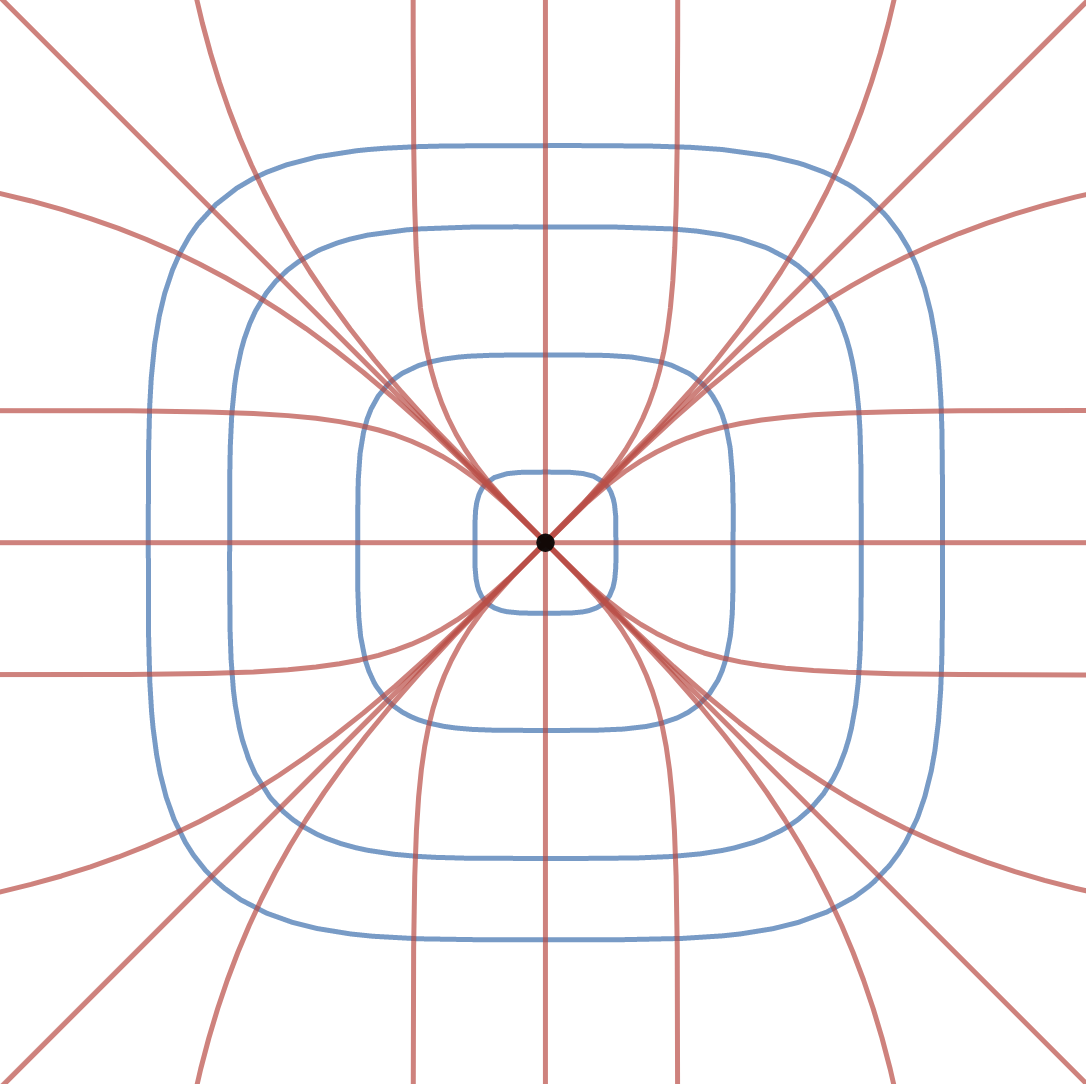}
    \caption{Vector field with index 1.}
    \label{figure:vfield6}
\end{minipage}
\begin{minipage}{.32\textwidth}
    \centering
    \includegraphics[scale=0.18]{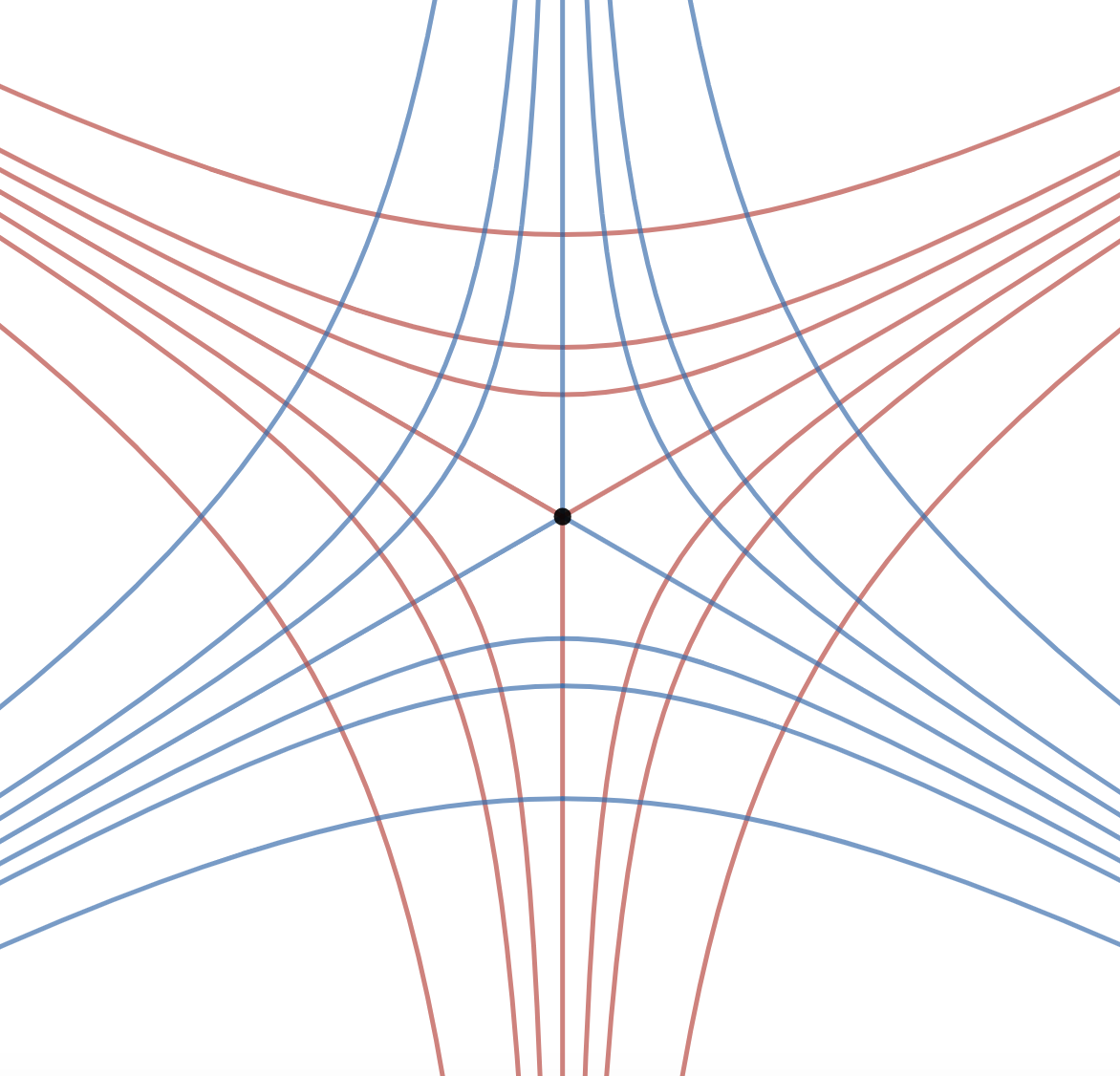}
    \caption{Vector field with index -$\frac{1}{2}$.}
    \label{figure:vfield7}
\end{minipage}
\begin{minipage}{.32\textwidth}
    \centering
    \includegraphics[scale=0.45]{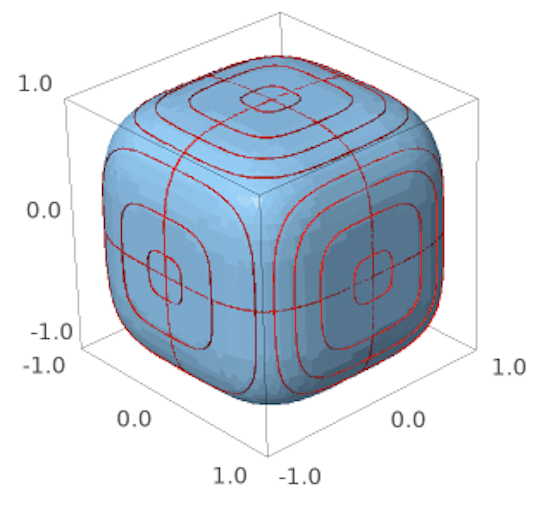}
    \caption{Lines of curvature.}
    \label{figure:vfield8}
\end{minipage}
\end{figure}
\noindent We can deduce by inspection that these vector fields have index 1 and $-\frac{1}{2}$, respectively. For verification, we compare Figure \ref{figure:vfield} with \cite{MR3676571} (section 15, page 156) and check that these values agree with the Poincaré-Hopf index theorem.
\begin{rmk}
This theorem readily extends to surfaces of the form $ax^{2k}+by^{2k}+cz^{2k}=R$ for $R>0$ through a scaling of the constants $a,b,c.$
\end{rmk}
\begin{rmk}
Note that surfaces of the form $ax^{2k+1} + by^{2k+1} + cz^{2k+1} = 1$ are not closed, since any of $x,y,z$ can extend infinitely in the negative direction. Thus, such surfaces are not of interest, and it is safe to claim we have investigated the umbilic points of all surfaces of the form $ax^k+by^k+cz^k=1$ with $a,b,c>0,k\in\mathds{Z}^+$. 
\end{rmk}
\begin{rmk}
We can verify the results of NDSolve by plotting the logs of the residuals. Note that solving with higher precision (red) improves upon error, as compared to machine precision (black). See Appendix \hyperlink{appendix:4}{[4]} for the code.
\begin{figure}[H]
\begin{minipage}{.3\textwidth}
    \centering
    \includegraphics[scale=0.35]{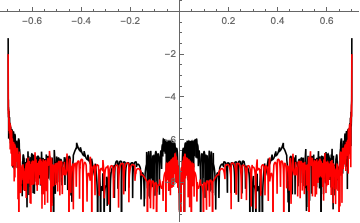}
    \caption*{$(v,u)=(0,0.7)$ \\ $\{v,-0.7,0.7\}$}
    \label{residual:1}
\end{minipage}
\begin{minipage}{.3\textwidth}
    \centering
    \includegraphics[scale=0.35]{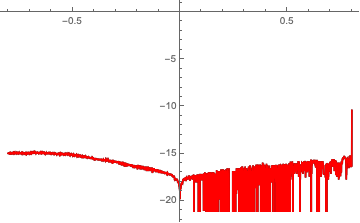}
    \caption*{$(v,u)=(0.8,0.8)$ \\ $\{v,-0.8,0.8\}$}
    \label{residual:2}
\end{minipage}
\begin{minipage}{.3\textwidth}
    \centering
    \includegraphics[scale=0.35]{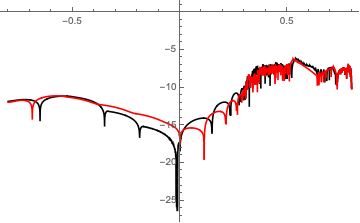}
    \caption*{$(v,u)=(0.8,0.4)$ \\ $\{v,-0.8,0.8\}$}
    \label{residual:3}
\end{minipage}
\end{figure}
\noindent The above graphs correspond to the outputs \ref{output:1}, \ref{output:2}, \ref{output:3} respectively. We see that the error mostly remains smaller than a $10^{-5}$ magnitude.
\end{rmk}

\section{A Perturbation of the Ellipsoid}
\label{sec:3}
In this section, we explore the family of surfaces of the form $ax^2 + \epsilon x^4 + ay^2 + \epsilon y^4 + bz^2 = 1$, where $a, b, \epsilon>0$. We investigate the dependence of the number and location of umbilic points on $\epsilon$. Note that when $\epsilon=0$, this surface reduces to an ellipsoid of revolution  $ax^2+ay^2+bz^2=1$. Refer to Figures \ref{fig:surface2.1}, \ref{fig:surface2.2}, \ref{fig:surface2.3}, \ref{fig:surface2.4} for several instances.
\begin{thm}
Consider the surface $ax^2 + \epsilon x^4 + ay^2 + \epsilon y^4 + bz^2 = 1$ where $a,b,\epsilon\geq0.$ Then,   
\[
  \{\text{\# of umbilic points} \mid a>b\} =
  \begin{cases}
                                2 & \text{if $\epsilon\leq\frac{a^2}{6}\left(\frac{a}{b}-1\right)$} \\
                                10 & \text{if $\epsilon>\frac{a^2}{6}\left(\frac{a}{b}-1\right).$}
  \end{cases}\]
  When $\epsilon\leq\frac{a^2}{b}\left(\frac{a}{b}-1\right),$ these points occur at $(0,0,\pm \sqrt{1/b})$ and have index 1. When $\epsilon>\frac{a^2}{b}\left(\frac{a}{b}-1\right),$ these points occur at $(0,0,\pm \sqrt{1/b})$ and \label{location of a>b} $$\left(0,\pm\left(\frac{-a+\sqrt{3b(2a+b)^{-1}(a^2+4e)}}{2\epsilon}\right)^\frac{1}{2},\pm\left(\frac{(a-b)(a^2+4\epsilon)}{2b\epsilon (2a+b)}\right)^{\frac{1}{2}}\right),$$ $$\left(\pm\left(\frac{-a+\sqrt{3b(2a+b)^{-1}(a^2+4e)}}{2\epsilon}\right)^\frac{1}{2},0,\pm\left(\frac{(a-b)(a^2+4\epsilon)}{2b\epsilon (2a+b)}\right)^{\frac{1}{2}}\right).$$ Also,
\[
  \{\text{\# of umbilic points} \mid a<b\} =
  \begin{cases}
                                2 & \text{if $\epsilon<\frac{(5a+b)(b-a)}{18}$} \\
                                18 & \text{if $\epsilon>\frac{(5a+b)(b-a)}{18}$}
  \end{cases}\]
  When $\epsilon<\frac{(5a+b)(b-a)}{18},$ these points occur at $(0,0,\pm \sqrt{1/b})$ and have index 1. When $\epsilon>\frac{(5a+b)(b-a)}{18},$ these points occur at $(0,0,\pm \sqrt{1/b})$ and $$\left(\pm\sqrt{\frac{b-a}{6\epsilon}},\pm\sqrt{\frac{b-a}{6\epsilon}},\pm\left(\frac{5a^2-4ab-b^2+18\epsilon}{18b\epsilon}\right)^{\frac{1}{2}}\right).$$
\end{thm}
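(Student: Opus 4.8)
The plan is to reuse the apparatus of the previous proof while leaning on the large symmetry group of this surface. Writing $f(x,y,z)=ax^{2}+\epsilon x^{4}+ay^{2}+\epsilon y^{4}+bz^{2}-1$, each summand is convex, so $f$ is convex and bounds a convex body; since $\nabla f$ vanishes only at the origin, which is not on the surface, $0$ is a regular value, and the surface is smooth, closed, and convex by the Convexity Lemma. I would then parameterize the upper cap as the graph $\mathcal{S}=\left(u,\,v,\,\left(\frac{1-au^{2}-\epsilon u^{4}-av^{2}-\epsilon v^{4}}{b}\right)^{1/2}\right)$ and record $E,F,G,e,f,g$ as in Appendix~[1]. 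By the Weingarten Lemma an umbilic point is one where $(e,f,g)$ is proportional to $(E,F,G)$; clearing the common factor $\sqrt{1+z_{u}^{2}+z_{v}^{2}}$ turns this into the system $z_{uu}z_{u}z_{v}=z_{uv}(1+z_{u}^{2})$, $z_{vv}z_{u}z_{v}=z_{uv}(1+z_{v}^{2})$, and $z_{uu}(1+z_{v}^{2})=z_{vv}(1+z_{u}^{2})$.

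The key structural fact is that the surface is invariant under the order-eight dihedral group acting on $(x,y)$---it is preserved by $x\leftrightarrow y$ and by the quarter-turn $(x,y)\mapsto(-y,x)$---as well as under $z\mapsto-z$, so the umbilic set is invariant under this group. I would therefore search its fixed loci, namely the $z$-axis, the coordinate axes, and the diagonals $u=\pm v$, and then rule out everything else. The elimination is clean: off the axes one has $z_{u},z_{v}\neq0$, and the identity $z_{uv}=-z_{u}z_{v}/z$ lets me divide the first two equations by $z_{u}z_{v}$, reducing them to $z_{uu}=-(1+z_{u}^{2})/z$ and its $v$-analogue; these force $-2a-12\epsilon u^{2}=-2a-12\epsilon v^{2}=-2b$, so $u^{2}=v^{2}=\frac{b-a}{6\epsilon}$ and the point lies on a diagonal. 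On the $z$-axis the quadratic part of the surface is isotropic, so $(0,0,\pm\sqrt{1/b})$ are umbilic for every $\epsilon$; these are the two points common to both regimes.

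On a coordinate axis, say $v=0$, the two off-diagonal equations hold automatically and the third becomes a quadratic in $s=u^{2}$ with a single positive root; that root gives a genuine surface point only when its height satisfies $z_{0}^{2}=\frac{(a-b)(a^{2}+4\epsilon)}{2b\epsilon(2a+b)}>0$, i.e.\ when $a>b$ and $\epsilon>\frac{a^{2}}{6}\left(\frac{a}{b}-1\right)$, producing the eight listed axis points. The diagonal radius $u^{2}=\frac{b-a}{6\epsilon}$ found above likewise yields a genuine point only when its height $z_{0}^{2}=\frac{5a^{2}-4ab-b^{2}+18\epsilon}{18b\epsilon}$ is positive, i.e.\ when $a<b$ and $\epsilon>\frac{(5a+b)(b-a)}{18}$, producing the eight listed diagonal points. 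Both dichotomies thus reduce to deciding, locus by locus, when the candidate is real and respects the surface constraint $1-au^{2}-\epsilon u^{4}-av^{2}-\epsilon v^{4}>0$.

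The step I expect to be the main obstacle is that this graph degenerates exactly on the equator $z=0$, which the parameterization never reaches, yet for $a<b$ the bifurcation happens precisely there: the diagonal umbilics descend to $z_{0}=0$ as $\epsilon\downarrow\frac{(5a+b)(b-a)}{18}$. To recover the full count of eighteen I must re-examine the equator by reparameterizing as a graph over another coordinate plane, using the interchange of $a,b,c$ already invoked in Step 1 of the previous proof. On the equator the umbilic condition reduces to $(\alpha-2b)g_{y}^{2}+(\beta-2b)g_{x}^{2}=0$ with $\alpha=2a+12\epsilon x^{2}$, $\beta=2a+12\epsilon y^{2}$, $g_{x}=2ax+4\epsilon x^{3}$, and $g_{y}=2ay+4\epsilon y^{3}$, read off from the gradient and Hessian of $f$. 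For $a>b$ both coefficients are positive, forcing $x=y=0$, so the equator carries no umbilic and the count stays at ten; for $a<b$ the coefficients can have opposite signs, and solving this together with the equator equation yields an additional orbit of eight off-diagonal equatorial umbilics once $\epsilon>\frac{(5a+b)(b-a)}{18}$, raising the total to eighteen. Finally, as in Section~\ref{sec:2}, I would read off the indices numerically via \texttt{NDSolve} and check them against the Poincaré-Hopf theorem, leaving a rigorous index computation aside.
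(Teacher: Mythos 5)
Your proposal is correct in substance and lands on the same case decomposition as the paper (poles, coordinate axes, diagonals, then a separate equator check, with indices left to numerics and Poincar\'e--Hopf), but several of your technical routes are genuinely different and cleaner. For convexity you observe that $f$ is a sum of convex functions, so the surface bounds a convex body; the paper instead computes the sign of $eg-f^2$ and $EG-F^2$ and invokes Chern--Lashof. For the umbilic equations the paper expands $Ef=eF$, $Gf=gF$, $Ge=gE$ from the explicit fundamental-form coefficients and factors the results by hand; your use of the separated-variables identity $z_{uv}=-z_uz_v/z$ to reduce the two off-diagonal conditions to $a+6\epsilon u^2=a+6\epsilon v^2=b$ off the axes recovers exactly the paper's factored equations in two lines, and makes transparent why off-axis umbilics can only live on the diagonals and only when $a<b$. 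For the equator, the paper reparameterizes a rotated copy of the surface and reduces to a one-variable equation in a quantity $Q$; your condition $(\alpha-2b)g_y^2+(\beta-2b)g_x^2=0$ read off from the gradient and Hessian of $f$ is equivalent and gives an immediate sign argument for the $a>b$ case. These are worthwhile simplifications of the same overall argument.

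Two caveats. First, your sentence locating the axis umbilics slightly misattributes the two constraints: the positivity of $z_0^2=\frac{(a-b)(a^2+4\epsilon)}{2b\epsilon(2a+b)}$ only yields $a>b$; the threshold $\epsilon>\frac{a^2}{6}\left(\frac{a}{b}-1\right)$ comes from requiring the root $s=u^2$ of your quadratic to be positive (and indeed when $a<b$ that root is positive but the point falls off the surface, which is why the paper must argue separately that those candidates lie outside the surface). Both conditions appear in your write-up, so this is a presentational slip rather than a gap. Second, your count of exactly eight equatorial umbilics for $a<b$ and $\epsilon>\frac{(5a+b)(b-a)}{18}$, and zero below that threshold, is asserted rather than derived from your equator equation; the paper has the identical gap (it delegates precisely this step to a symbolic \texttt{Solve} call in Mathematica), so you are no worse off, but if you want a self-contained proof this is the step that still needs an honest elimination.
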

\begin{proof}
Again, we split the proof into two steps. In the first step, we show that this surface is closed, convex, and sufficiently smooth. In the second step, we compute the umbilic points and their indices.

\noindent \textbf{Step 1.} We first show the surface is regular. This surface is expressible as $f^{-1}(0)$, where
$$f(x,y,z)=ax^2 + \epsilon x^4 + ay^2 + \epsilon y^4 + bz^2-1.$$
$f$ is infinitely differentiable. Its partial derivatives $f_x=2ax+4\epsilon x^3$, $f_y=2ay+4\epsilon y^3$, $f_z=2bz$ vanish simultaneously only at $(x,y,z)=(0,0,0)$, which is not contained in $f^{-1}(0)$. Thus, 0 is a regular value of $f$, and this surface is regular. To prove this surface is convex, we compute its fundamental coefficients. We parameterize the top half as $\mathcal{S}=(u,v,b^{-\frac{1}{2}}(1-au^2-\epsilon u^4-av^2-\epsilon v^4)^\frac{1}{2}).$ Refer to appendix \hyperlink{appendix:5}{[5]} for detailed calculations of the fundamental form coefficients, which are defined as long as $1-au^2-\epsilon u^4-av^2-\epsilon v^4\neq 0$ and $1+\frac{1}{4}b^{-1}(1-au^2-\epsilon u^4-av^2-\epsilon v^4)^{-1}\left((2au+4\epsilon u^3)^2+(2av+4\epsilon v^3)^2\right)\neq 0,$ which is always true. 

Now, to prove this surface is convex, we compute the sign of its Gaussian curvature $K$: $$K=\frac{eg-f^2}{EG-F^2}.$$
Some examination gives $\sign(eg-f^2)> \sign((au+2\epsilon u^3)^2(av+2\epsilon v^3)^2-(au+2\epsilon u^3)^2(av+2\epsilon v^3)^2)=0.$ 
For the same reason, $\sign(EG-F^2)>0,$ so $\sign(K)=1$ and all such surfaces are convex.

\noindent \textbf{Step 2.} We use the same method as in the first surface. $Ef=eF$ becomes $$uv(a+2\epsilon u^2)(a+2\epsilon v^2)\Big(-1+\tfrac{1}{b}(a+6\epsilon u^2)\Big)=0.$$ $Gf=gF$ becomes $$uv(a+2\epsilon u^2)(a+2\epsilon v^2)\Big(-1+\tfrac{1}{b}(a+6\epsilon v^2)\Big)=0.$$
We now split this problem up into two cases. When $a>b,$ we have two possible solutions: $u=0$ or $v=0.$ When $a<b,$ we have three possible solutions: $u=0$ or $v=0$ or $u=\pm v=\pm \sqrt{\frac{b-a}{6\epsilon}}.$ 

\noindent First, consider $a>b.$ Plugging in $u=0$ to $Ge=gE$ gives the following polynomial in $v^2$: $$v^6\Big(-2\epsilon^2-\frac{4a}{b}\epsilon^2\Big)+v^4\Big(-2a\epsilon -\frac{4a^2}{b}\epsilon\Big)+v^2\left(a^2-\frac{a^3}{b}+6\epsilon\right)=0.$$ Clearly, $v=0$ is a solution. Factoring out $v^2$ and solving gives $$v^2=-\frac{a}{2\epsilon}\pm\frac{1}{2\epsilon}\left(\frac{b}{2a+b}\right)\sqrt{3\left(\frac{2a+b}{b}\right)(a^2+4\epsilon)},$$ which give real nonzero solutions for $v$ as long as \begin{align*}&-\frac{a}{2\epsilon}+\frac{1}{2\epsilon}\left(\frac{b}{2a+b}\right)\sqrt{3\left(\frac{2a+b}{b}\right)(a^2+4\epsilon)}>0 \\ &\Longrightarrow 3(a^2+4\epsilon)< a^2\left(1+\frac{2a}{b}\right) \\ &\Longrightarrow \epsilon > \frac{a^2}{6}\left(\frac{a}{b}-1\right).\end{align*}
For $\epsilon > \frac{a^2}{6}\left(\frac{a}{b}-1\right),$ we have two nonzero solutions for $v,$ indicating two umbilic points in the case $u=0.$ Their locations given in Theorem \ref{location of a>b} can be computed by substituting in $v$ and solving for the $z$ coordinate. By symmetry, the case $v=0$ gives two umbilic points as well, for a total of ten umbilic points. 

For $\epsilon \leq \frac{a^2}{6}\left(\frac{a}{b}-1\right),$ we have no nonzero solutions for $v,$ and by symmetry, no nonzero solutions for $u$ when $v=0$ as well. This indicates that the only umbilic points are the two that occur when $u,v=0.$ See Figures \ref{fig:surface2.1} and \ref{fig:surface2.2} for umbilic points in black on $\frac{1}{2}x^2+\epsilon x^4+\frac{1}{2}y^2+\epsilon y^4+\frac{1}{5}z^2=1.$
\begin{figure}[H]
\begin{minipage}{.49\textwidth}
    \centering
    \includegraphics[scale=0.43]{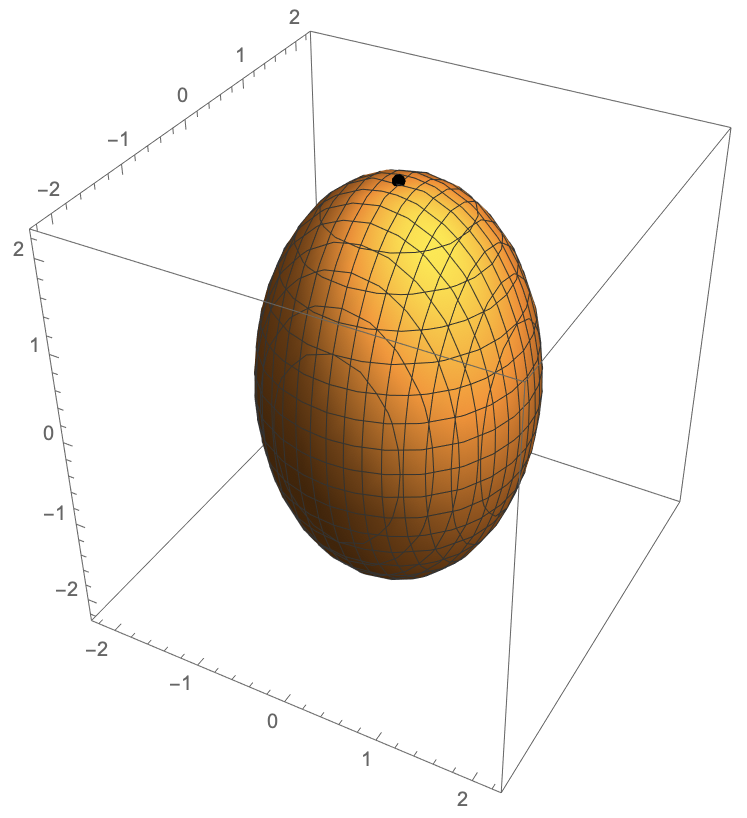}
    \caption{Umbilic points for small $\epsilon$}
    \label{fig:surface2.1}
\end{minipage}
\begin{minipage}{.49\textwidth}
    \centering
    \includegraphics[scale=0.44]{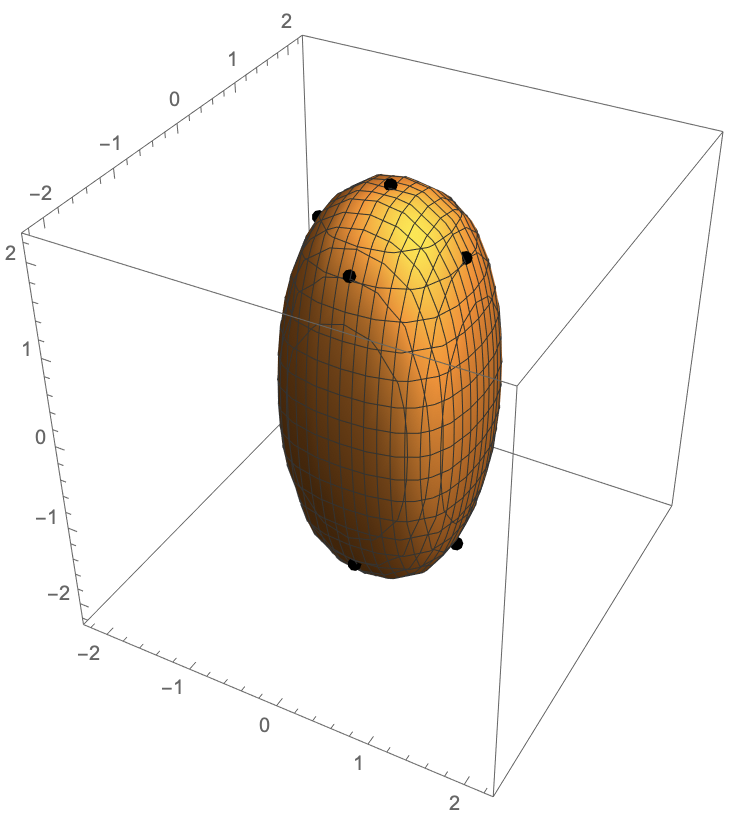}
    \caption{Umbilic points for larger $\epsilon$}
    \label{fig:surface2.2}
\end{minipage}
\end{figure}
Now, consider $a<b.$ Plugging in $u=0$ to $Ge=gE$ gives the same polynomial as above, where $v=0$ is again a solution. Factoring out $v^2$ and solving gives $$v^2=-\frac{a}{2\epsilon}+\frac{1}{2\epsilon}\left(\frac{b}{2a+b}\right)\sqrt{3\left(\frac{2a+b}{b}\right)(a^2+4\epsilon)}.$$ However, we can prove that these solutions always lie outside of the surface. When $u=0,$ the largest $v$ value occurs when $z=0:$ $\epsilon y^4+ay^2-1=0$ gives $$v^2=\frac{-a+\sqrt{a^2+4\epsilon}}{2\epsilon}.$$
We get the following:
\begin{align*}
    a<b &\Longrightarrow \frac{2a}{b}<2 \Longrightarrow \sqrt{1+\frac{2a}{b}} < \sqrt{3} \Longrightarrow 1+\frac{2a}{b}<\sqrt{3\left(1+\frac{2a}{b}\right)} \\ &\Longrightarrow \frac{-a+\sqrt{a^2+4\epsilon}}{2\epsilon}<-\frac{a}{2\epsilon}+\frac{1}{2\epsilon}\left(\frac{b}{2a+b}\right)\sqrt{3\left(1+\frac{2a}{b}\right)(a^2+4\epsilon)}.
\end{align*}
Thus, the only umbilic point for the case $u=0$ is given by $u,v=0,$ which by symmetry is also true of the case $v=0.$ Finally, consider the case $u=\pm v=\pm \sqrt{\frac{b-a}{2\epsilon}}.$ By symmetry in $u$ and $v$, $Ge=gE$ is always satisfied so long as $u=v=\sqrt{\frac{b-a}{2\epsilon}}$ actually lies on the surface. Along the line $u=v,$ the furthest point on the surface is given by $2\epsilon x^4+2ax^2-1=0,$ or $$x=\left(\frac{-a+\sqrt{a^2+2\epsilon}}{2\epsilon}\right)^{\frac{1}{2}}.$$ So, these solutions exist as long as
\begin{align*}
    &\sqrt{\frac{b-a}{6\epsilon}}<\left(\frac{-a+\sqrt{a^2+2\epsilon}}{2\epsilon}\right)^{\frac{1}{2}} \\ &\Longrightarrow \frac{b-a}{3}< -a+\sqrt{a^2+2\epsilon} \\ &\Longrightarrow 2a+b< 3\sqrt{a^2+2\epsilon} \\ &\Longrightarrow 4a^2+4ab+b^2< 9(a^2+2\epsilon) \\ &\Longrightarrow \epsilon > \frac{(5a+b)(b-a)}{18}.
\end{align*}
Thus, for $\epsilon > (5a+b)(b-a)/18,$ there are two umbilics that occur when $u,v=0$ and eight umbilics that occur when $u=\pm v=\pm \sqrt{\frac{b-a}{2\epsilon}}.$ For $\epsilon < (5a+b)(b-a)/18,$ there are exactly two umbilics that occur when $u,v=0.$ See Figures \ref{fig:surface2.3} and \ref{fig:surface2.4} for umbilic points in black on $\frac{1}{5}x^2+\epsilon x^4+\frac{1}{5}y^2+\epsilon y^4+\frac{1}{2}z^2=1.$
\begin{figure}[H]
\begin{minipage}{.48\textwidth}
    \centering
    \includegraphics[scale=0.43]{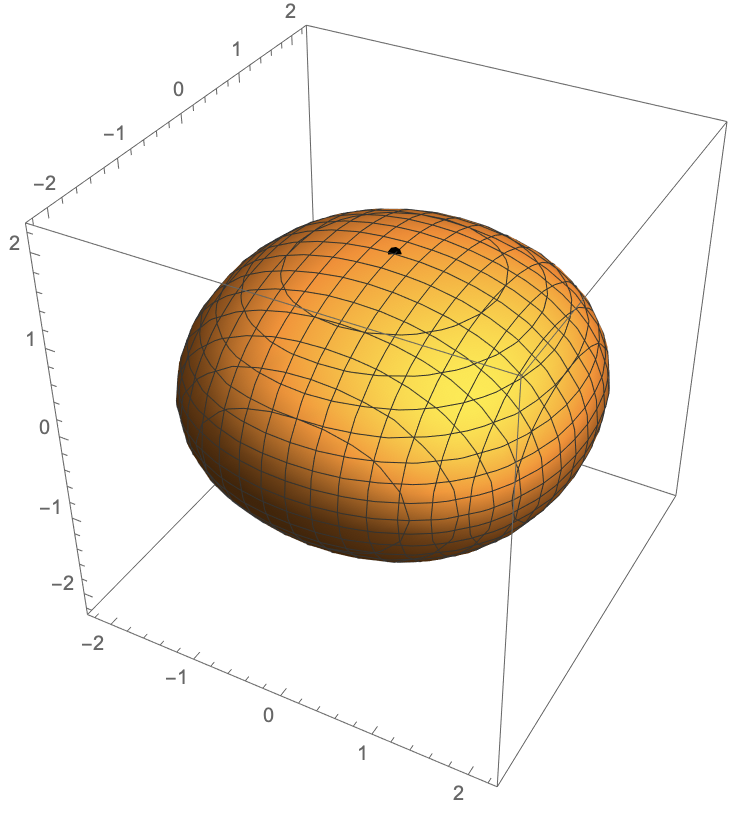}
    \caption{Umbilic points for small $\epsilon$}
    \label{fig:surface2.3}
\end{minipage}
\begin{minipage}{.48\textwidth}
    \centering
    \includegraphics[scale=0.44]{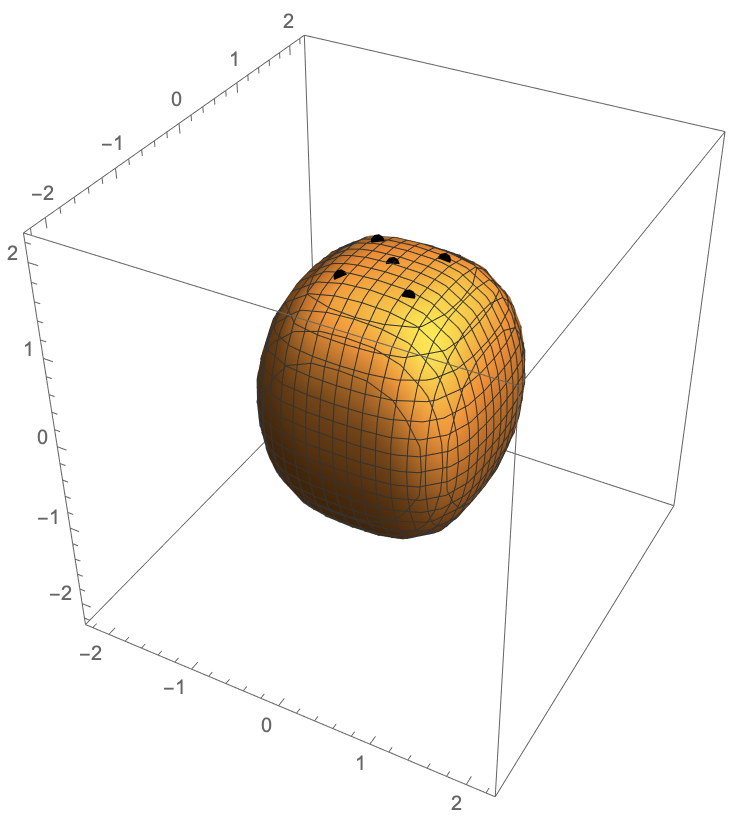}
    \caption{Umbilic points for larger $\epsilon$}
    \label{fig:surface2.4}
\end{minipage}
\end{figure}
\noindent We can verify all of the above results by checking computationally.

Now, we have found all umbilic points on our top and bottom half parametrizations, but still are missing the equator of the surface ($z=0$). To check this equator for umbilics, we consider a new rotated surface $ax^2+\epsilon x^4+by^2+az^2+\epsilon z^4=1$ parameterized by $\mathcal{S}=(u,v,(2\epsilon)^{-\frac{1}{2}}(-a+(a^2+4\epsilon(1-au^2-\epsilon u^4-av^2-\epsilon v^4))^\frac{1}{2})).$ Since we only need to check the boundary of our previous parameterization for umbilic points, we can substitute $y=0$ in all computations. Refer to appendix \hyperlink{appendix:7}{[7]} for detailed computations of the fundamental form coefficients. Note that after substitution, $F=f=0$, so that $Gf=Fg$ and $Ef=Fe$ are always satisfied. $Eg=Ge$ then becomes $$u^2(a+2\epsilon u^2)^2(6\epsilon Q+a-b)+(2\epsilon Q+a)^2Q(a-b+6\epsilon u^2)=0,$$ where
$$Q=\frac{\sqrt{a^2+4\epsilon(1-au^2-\epsilon u^4)}-a}{2\epsilon}.$$
For our parameterization, $Q>0.$ Thus, for the surface $a>b,$ the quantity on the left is nonnegative, indicating no solutions on the equator. We have now proved the following: \[
  \{\text{\# of umbilic points} \mid a>b\} =
  \begin{cases}
                                2 & \text{if $\epsilon<\frac{a^2}{b}\left(\frac{a}{b}-1\right)$} \\
                                10 & \text{if $\epsilon>\frac{a^2}{b}\left(\frac{a}{b}-1\right).$}
  \end{cases}\] In the first case, the two umbilics must have index 1 by Poincaré-Hopf. In the second case, we can graph the lines of curvature as we did with the first surface. Doing so for $a=0.516,b=0.3,\epsilon=0.1$ gives the below graphs. Refer to appendix \hyperlink{appendix:8}{[8]} for relevant code.  
\begin{figure}[H]
\begin{minipage}{.45\textwidth}
    \centering
    \includegraphics[scale=0.4]{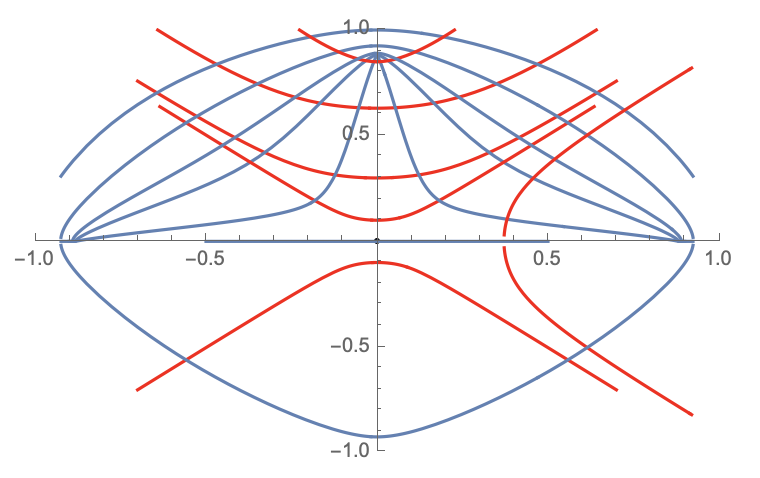}
    \caption{Lines of curvature around $u,v=0$ umbilic}
    \label{figure:vfield13}
\end{minipage}
\begin{minipage}{.45\textwidth}
    \centering
    \includegraphics[scale=0.4]{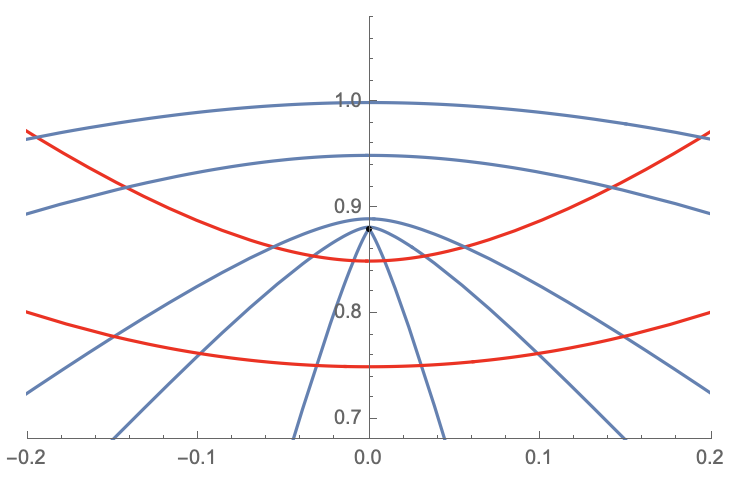}
    \caption{Lines of curvature around $u=0$ umbilic}
    \label{figure:vfield14}
\end{minipage}
\end{figure}
\noindent From these, it is clear that there are then two umbilics with index $-1$ and eight with index 1/2.
\begin{conj}\label{conj1}
Independent of $a$ and $b,$ the surface $ax^2+\epsilon x^4+ay^2+\epsilon y^4+bz^2=0$ with $a>b$ has exactly two umbilics with index $-1$ and eight with index 1/2 when $\epsilon > \frac{a^2}{b}\left(\frac{a}{b}-1\right).$
\end{conj}

For the surface $a<b,$ we can solve this equation in Mathematica. For $\epsilon<(5a+b)(b-a)/18,$ the below code gives no real solutions for $u,$ indicating no umbilic points on the equator: \begin{lstlisting}[language=Mathematica,columns=fullflexible]
(*Define Q for sake of simplicity*)
Q = (Sqrt[a^2 + 4*e*(1 - a*u^2 - e*u^4)] - a)/(2*e)
(*Solve equation symbolically assuming conditions*)
Assuming[{b > a > 0, (5*a + b)*(b - a)/18 > e > 0, 1 - a*u^2 - e*u^4 > 0}, Simplify[Solve[u^2*(a + 2*e*u^2)^2*(6*e*Q + a - b) + (2*e*Q + a)^2*Q*(a - b + 6*e*u^2) == 0, u, Reals]]]
\end{lstlisting}
Similarly, for $\epsilon>(5a+b)(b-a)/18,$ the below code gives exactly four solutions for $u,$ indicating eight total umbilic points on the equator:
\begin{lstlisting}[language=Mathematica,columns=fullflexible,firstnumber=3]
Assuming[{b > a > 0, (5*a + b)*(b - a)/18 < e, 1 - a*u^2 - e*u^4 > 0},Simplify[Solve[{u^2*(a + 2*e*u^2)^2*(6*e*Q + a - b) + (2*e*Q + a)^2*Q*(a - b + 6*e*u^2) == 0, u < Sqrt[-(a/e) + Sqrt[a^2 + 4 e]/e]/Sqrt[2]}, u, Reals]]]\end{lstlisting}
We have now proved the following: \[
  \{\text{\# of umbilic points} \mid a<b\} =
  \begin{cases}
                                2 & \text{if $\epsilon<\frac{(5a+b)(b-a)}{18}$} \\
                                18 & \text{if $\epsilon>\frac{(5a+b)(b-a)}{18}$}
  \end{cases}\]
In the first case, the two umbilics must have index 1 by Poincaré-Hopf. In the second case, we can graph the lines of curvature again. Doing so for $a=0.3, b=0.516, \epsilon=0.1$ gives the below graphs.
\begin{figure}[H]
\begin{minipage}{.49\textwidth}
    \centering
    \includegraphics[scale=0.4]{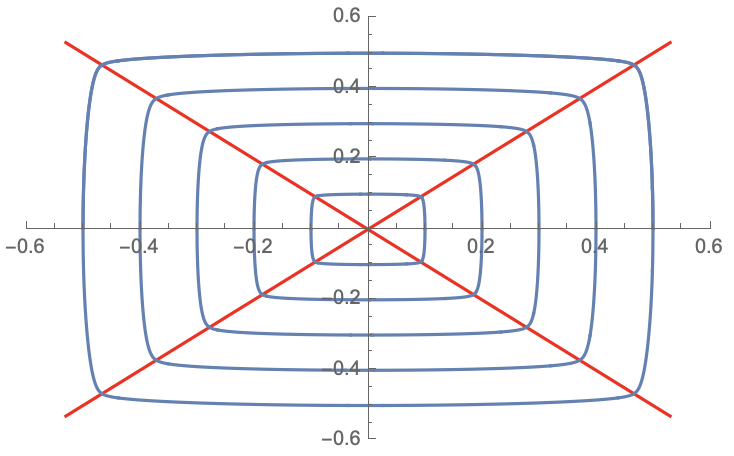}
    \caption{Lines of curvature around $u,v=0$ umbilic}
    \label{figure:vecfield}
\end{minipage}
\begin{minipage}{.49\textwidth}
    \centering
    \includegraphics[scale=0.4]{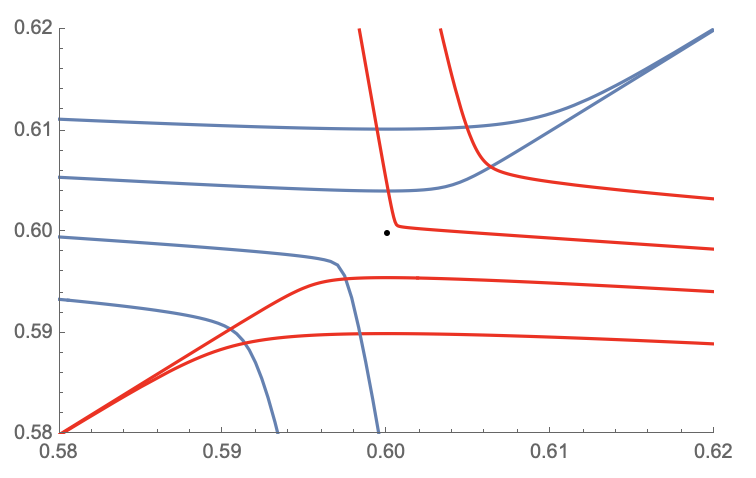}
    \caption{Lines of curvature around $u=v$ umbilic}
    \label{figure:vfield}
\end{minipage}
\end{figure}
From these, it is clear that there are then two umbilics with index 1 and eight with index $-1/2.$ The eight equator umbilics must then have index 1/2.
\begin{conj}\label{conj2}
Independent of $a$ and $b,$ the surface $ax^2+\epsilon x^4+ay^2+\epsilon y^4+bz^2=0$ with $a<b$ has exactly two umbilics with index $1,$  eight with index $-1/2,$ and eight with index $1/2$ when $\epsilon > (5a+b)(b-a)/18.$
\end{conj}
This concludes the proof.
\end{proof}
\begin{rmk}
    In a previous version of this paper, we studied the example $\frac{x^2}{4} + \epsilon x^4 + \frac{y^2}{4} + \epsilon y^4 + \frac{z^2}{9} = 1$ for small $\epsilon>0.$ The exploration of the family of surfaces $ax^2 + \epsilon x^4 + ay^2 + \epsilon y^4 + bz^2 = 1$ as a generalization of that one was mentioned as an open question in the previous paper, and in the semifinal presentations of the 2019 Yau High School Science Awards. We solved the open question this year.
\end{rmk}
\begin{rmk}
    The surface $ax^2 + \epsilon x^4 + ay^2 + \epsilon y^4 + bz^2 = 1$ is only closed and convex for $a,b,\epsilon>0.$ It is always symmetric across the $x,y$ axes and $z=0$ plane. However, for $b<0,$ the surface does not cross the $z=0$ plane, so it must be disconnected. For $\epsilon<0,$ the $\epsilon x^4$ and $\epsilon y^4$ can be infinitely negative, so all such surfaces must also be open. The surface with $a<0$ and $b,\epsilon >0$ is concave. This is easy to see by setting $x$ to be 0 and implicitly taking the second $y$-derivative of $bz^2= -ay^2 - \epsilon y^4 + 1,$ which gives $\partial^2z/\partial y^2=-a/bz$.  Thus, it is safe to assume $a,b,\epsilon>0$ is the only meaningful surface to study.
\end{rmk}


\noindent Similarly, we would like to prove or disprove Conjectures \ref{conj1} and \ref{conj2}:
\begin{enumerate}[noitemsep,topsep=0pt]
    \item Does the surface $ax^2+\epsilon x^4+ay^2+\epsilon y^4+bz^2=0$ with $a>b$ and $\epsilon > \frac{a^2}{b}\left(\frac{a}{b}-1\right)$ have exactly two umbilics with index $-1$ and eight with index 1/2, independent of $a$ and $b$?
    \item Does the surface $ax^2+\epsilon x^4+ay^2+\epsilon y^4+bz^2=0$ with $a<b$ and $\epsilon > (5a+b)(b-a)/18$ have exactly two umbilics with index $1,$ eight with index $-1/2,$ and eight with index $1/2$ independent of $a$ and $b$?
\end{enumerate}

\section*{Appendix}
\noindent\hypertarget{appendix:1}{[\textbf{1}]} As mentioned, parameterize the surface as $\mathcal{S} = (u, v, (1-au^{2k}-bv^{2k})^{\frac{1}{2k}}$. Then, 
\begin{align*}
    &\Vec{\mathcal{S}}_{u} = (1, 0, -ac^{-\frac{1}{2k}}u^{2k-1}(-au^{2k}-bv^{2k}+1)^{\frac{1}{2k}-1}) \\
    &\Vec{\mathcal{S}}_v = (0, 1, -bc^{-\frac{1}{2k}}u^{2k-1}(-au^{2k}-bv^{2k}+1)^{\frac{1}{2k}-1}) \\
    &\Vec{\mathcal{S}}_{uu} = \left(0, 0, a(2k-1)c^{-\frac{1}{2k}}u^{2k-2}(bv^{2k}-1)(-au^{2k}-bv^{2k}+1)^{-\frac{1}{2k}-2}\right) \\
    &\Vec{\mathcal{S}}_{uv} = \left(0, 0, 2abk(\tfrac{1}{2k}-1)c^{-\frac{1}{2k}}x^{2k-1}y^{2k-1}(-ax^{2k}-by^{2k}+1)^{\frac{1}{2k}-2}\right) \\
    &\Vec{\mathcal{S}}_{vv} = \left(0, 0, b(2k-1)c^{-\frac{1}{2k}}x^{2k-2}(ax^{2k}-1)(-ax^{2k}-by^{2k}+1)^{\frac{1}{2k}-2}\right) \\
    &E = \Vec{\mathcal{S}}_{u}\cdot\Vec{\mathcal{S}}_{u}=1+a^2c^{-\frac{1}{k}}x^{4k-2}(-au^{2k}-bv^{2k}+1)^{\frac{1}{k}-2} \\
    &F = \Vec{\mathcal{S}}_{u}\cdot\Vec{\mathcal{S}}_v = abc^{-\frac{1}{k}}x^{2k-1}y^{2k-1}(-ax^{2k}-by^{2k}+1)^{\frac{1}{k}-2} \\
    & G = \Vec{\mathcal{S}}_v\cdot\Vec{\mathcal{S}}_v = 1+b^2c^{-\frac{1}{k}}v^{4k-2}(-au^{2k}-bv^{2k}+1)^{\frac{1}{k}-2}.
\end{align*}
\begin{align*}
    \Vec{N}\mid\Vec{\mathcal{S}}_u\times\Vec{\mathcal{S}}_v \mid & = \Vec{\mathcal{S}}_u\times\Vec{\mathcal{S}}_v= (ac^{-\frac{1}{2k}}x^{2k-1}(-ax^{2k}-by^{2k}+1)^{\frac{1}{2k}-1}, bc^{-\frac{1}{2k}}y^{2k-1}(-ax^{2k}-by^{2k}+1)^{\frac{1}{2k}-1}, 1)
\end{align*}
$$e = -\Vec{N}\cdot\Vec{\mathcal{S}}_{uu}, \hspace{0.6cm}
f = -\Vec{N}\cdot\Vec{\mathcal{S}}_{uv}, \hspace{0.6cm}
g = -\Vec{N}\cdot\Vec{\mathcal{S}}_{vv}.
$$
\\~\\
\noindent\hypertarget{appendix:2}{[\textbf{2}]}
\onehalfspacing In the following example, set $a=b=c=1, k=2$ and initial conditions $(v,u)=(0,0.7)$ and $v\in[-0.699999,0.699999]$ to avoid singularities at $v=\{-0.7,0.7\}$. We allow for 20 digits of precision and set ``SolveDelayed" to ``True" to avoid singularities.
\singlespacing
\begin{center}
\begin{minipage}{0.46\linewidth}
\begin{lstlisting}[language=Mathematica,columns=fullflexible]
(* Fix a surface*)
a=1; b=1; c=1; k=2
 
(* Set parameterization *)
z = (1 - a (u[v])^(2k) - b v^(2k))^(1/(2k))

(* Calculate derivatives*)
Su = {1, 0, D[z, u[v]]}
Sv = {0, 1, D[z, v]}
Suu = {0, 0, D[z, {u[v], 2}]}
Svv = {0, 0, D[z, {u[v], 2}]}
Suv = {0, 0, D[z, u[v], v]}

(* Calculate fundamental coefficients*)
Es = Dot[Su, Su]
Gs = Dot[Sv, Sv]
Fs = Dot[Su, Sv]
es = -Cross[Su, Sv].Suu
gs = -Cross[Su, Sv].Svv
fs = -Cross[Su, Sv].Suv
\end{lstlisting}
\end{minipage}
\qquad
\begin{minipage}{0.48\linewidth}
\begin{lstlisting}[language=Mathematica,columns=fullflexible,firstnumber=21]
X = fs*Es - es*Fs
Y = gs*Es - es*Gs
Z = gs*Fs - fs*Gs

(* Find lines of curvature with machine precision *)
lowsol = NDSolve[{X(u'[v])^2 + Y(u'[v]) + Z == 0, u[0] == 0.7}, u, {v, -0.699999, 0.699999}, SolveDelayed -> True, InterpolationOrder -> All]

(* Solve with higher precision *)
highsol = NDSolve[{X(u'[v])^2 + Y(u'[v]) + Z == 0, u[0] == 0.7}, u, {v, -0.699999, 0.699999}, SolveDelayed -> True, InterpolationOrder -> All, PrecisionGoal -> 20]

(* Plot results *)
Plot[Evaluate[u[v] /. %], {v, -0.699999, 0.699999}, PlotRange -> {{-1, 1}, {-1, 1}}]
\end{lstlisting} 
\end{minipage}
\end{center}
\noindent\hypertarget{appendix:3}{[\textbf{3}]} \onehalfspacing We overlay several solution curves for the following tested values of $a,b,c,k$. Note that they all have the same general shape, even when $a,b,c,$ or $k$ is relatively large, suggesting a fixed index. Taking advantage of symmetry gives Figure \ref{figure:vecfield}. \singlespacing
\begin{figure}[H]
\begin{minipage}{.32\textwidth}
    \centering
    \includegraphics[scale=0.38]{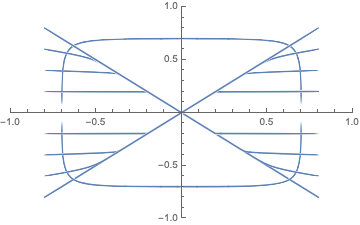}
    \caption*{$a=1,b=1$\\$c=100,k=2$}
\end{minipage}
\begin{minipage}{.32\textwidth}
    \centering
    \includegraphics[scale=0.38]{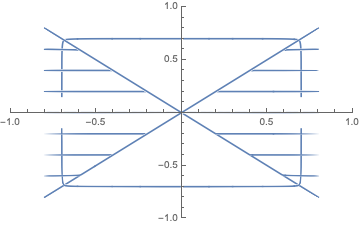}
    \caption*{$a=1,b=1$\\$c=1,k=4$}
\end{minipage}
\begin{minipage}{.32\textwidth}
    \centering
    \includegraphics[scale=0.38]{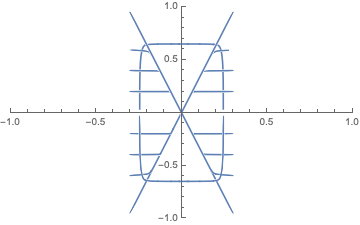}
    \caption*{$a=1,b=10,$\\$c=10,k=2$}
\end{minipage}
\end{figure}
\noindent\hypertarget{appendix:4}{[\textbf{4}]}
\begin{lstlisting}[language=Mathematica, firstnumber = 17,columns=fullflexible]
(*Define residual function*)
residual[v_] = X(u'[v])^2 + Y(u'[v]) + Z

(*Plot log of residuals*)
Plot[Evaluate[RealExponent[{residual[v] /. lowsol, residual[v] /. highsol}]], {v, -0.699999, 0.699999}, PlotStyle -> {GrayLevel[0], RGBColor[1, 0, 0]}, AxesOrigin -> {0, 0}]
\end{lstlisting} 

\noindent\hypertarget{appendix:5}{[\textbf{5}]}
As mentioned, parameterize the surface as $\mathcal{S}=(u,v,b^{-\frac{1}{2}}(1-au^2-\epsilon u^4-av^2-\epsilon v^4)^\frac{1}{2}).$ Then,
\begin{align*}
    \Vec{\mathcal{S}}_{u}&=\left(1,0,b^{-\frac{1}{2}}(1-au^2-\epsilon u^4-av^2-\epsilon v^4)^{-\frac{1}{2}}(-au-2\epsilon u^3)\right) \\
    \Vec{\mathcal{S}}_{v}&=\left(0,1,b^{-\frac{1}{2}}(1-au^2-\epsilon u^4-av^2-\epsilon v^4)^{-\frac{1}{2}}(-av-2\epsilon v^3)\right) \\
    E&=\Vec{\mathcal{S}}_{u}\cdot\Vec{\mathcal{S}}_{u}=1+b^{-\frac{1}{2}}(1-au^2-\epsilon u^4-av^2-\epsilon v^4)^{-1}(-au-2\epsilon u^3)^2 \\
    F&=\Vec{\mathcal{S}}_{u}\cdot\Vec{\mathcal{S}}_{v}=b^{-\frac{1}{2}}(1-au^2-\epsilon u^4-av^2-\epsilon v^4)^{-1}(-au-2\epsilon u^3)(-av-2\epsilon v^3) \\
    G&=\Vec{\mathcal{S}}_{v}\cdot\Vec{\mathcal{S}}_{v}=1+b^{-\frac{1}{2}}(1-au^2-\epsilon u^4-av^2-\epsilon v^4)^{-1}(-av-2\epsilon v^3)^2 \\
    \Vec{\mathcal{S}}_{uu}&=\Big(0,0,-b^{-\frac{1}{2}}(-au-2\epsilon u^3)^2(1-au^2-\epsilon u^4-av^2-\epsilon v^4)^{-\frac{3}{2}}\\&+b^{-\frac{1}{2}}(1-au^2-\epsilon u^4-av^2-\epsilon v^4)^{-\frac{1}{2}}(-a-6\epsilon u^2)\Big) \\
    \Vec{\mathcal{S}}_{vv}&=\Big(0,0,-b^{-\frac{1}{2}}(-av-2\epsilon v^3)^2(1-au^2-\epsilon u^4-av^2-\epsilon v^4)^{-\frac{3}{2}}\\&+b^{-\frac{1}{2}}(1-au^2-\epsilon u^4-av^2-\epsilon v^4)^{-\frac{1}{2}}(-a-6\epsilon v^2)\Big)\\
    \Vec{\mathcal{S}}_{uv}&= \left(0,0,-b^{-\frac{1}{2}}(-au-2\epsilon u^3)(-av-2\epsilon v^3)(1-au^2-\epsilon u^4-av^2-\epsilon v^4)^{-\frac{3}{2}}\right)\\
    \Vec{N}\mid\Vec{\mathcal{S}}_u\times\Vec{\mathcal{S}}_v \mid &= \Vec{\mathcal{S}}_u\times\Vec{\mathcal{S}}_v=\Big(-b^{-\frac{1}{2}}(1-au^2-\epsilon u^4-av^2-\epsilon v^4)^{-\frac{1}{2}}(-au-2\epsilon u^3),\\&-b^{-\frac{1}{2}}(1-au^2-\epsilon u^4-av^2-\epsilon v^4)^{-\frac{1}{2}}(-av-2\epsilon v^3),1 \Big) \\
    e &= -\Vec{N}\cdot\Vec{\mathcal{S}}_{uu}, \hspace{0.6cm}
    f = -\Vec{N}\cdot\Vec{\mathcal{S}}_{uv}, \hspace{0.6cm}
    g = -\Vec{N}\cdot\Vec{\mathcal{S}}_{vv}.
\end{align*}
\\~\\
\noindent\hypertarget{appendix:6}{[\textbf{6}]}
Let $Q=(2\epsilon)^{-1}\left(\sqrt{a^2-4\epsilon(-1+au^2+\epsilon u^4)}-a\right).$ Then,
\begin{align*}
    \Vec{\mathcal{S}}_{u}&=\left(1,0,-(au+2\epsilon u^3)(2\epsilon Q+a)^{-1}Q^{-\frac{1}{2}}\right) \\
    \Vec{\mathcal{S}}_{u}&=\left(0,1,-bv(2\epsilon Q+a)^{-1}Q^{-\frac{1}{2}}\right)=(0,1,0) \\
    E&= \Vec{\mathcal{S}}_{u}\cdot\Vec{\mathcal{S}}_{u}=1+(au+2eu^3)^2(2\epsilon Q+a)^{-2}Q^{-1} \\
    F&=\Vec{\mathcal{S}}_{u}\cdot\Vec{\mathcal{S}}_{v}=bv(au+2eu^3)(2\epsilon Q+a)^{-2}Q^{-1}=0 \\
    G&=\Vec{\mathcal{S}}_{v}\cdot\Vec{\mathcal{S}}_{v}=1+b^2v^2(2\epsilon Q+a)^{-2}Q^{-1}=1 \\
    \Vec{\mathcal{S}}_{uu}&=\left(0,0,-(au+2\epsilon u^3)^2(2\epsilon Q+a)^{-2}Q^{-\frac{3}{2}}(1+4\epsilon Q(2\epsilon Q+a)^{-1})-(a+6\epsilon u^2)(2\epsilon Q+a)^{-1}Q^{-\frac{1}{2}}\right) \\
    \Vec{\mathcal{S}}_{vv}&=\left(0,0,-b^2v^2(2\epsilon Q+a)^{-2}Q^{-\frac{3}{2}}-4b^2ev^2(2\epsilon Q+a)^{-3}Q^{-\frac{1}{2}}-b(2\epsilon Q+a)^{-1}Q^{-\frac{1}{2}}\right)\\&=\left(0,0,-b(2\epsilon Q+a)^{-1}Q^{-\frac{1}{2}}\right) \\
    \Vec{\mathcal{S}}_{uv} &= (0,0,0) \\
    \Vec{N}&\mid\Vec{\mathcal{S}}_u\times\Vec{\mathcal{S}}_v \mid = \Vec{\mathcal{S}}_u\times\Vec{\mathcal{S}}_v=\left((au+2\epsilon u^3)(2\epsilon Q+a)^{-1}Q^{-\frac{1}{2}},0,1\right) \\
    e &= -\Vec{N}\cdot\Vec{\mathcal{S}}_{uu}, \hspace{0.6cm}
    f = -\Vec{N}\cdot\Vec{\mathcal{S}}_{uv}, \hspace{0.6cm}
    g = -\Vec{N}\cdot\Vec{\mathcal{S}}_{vv}.
\end{align*}

\noindent\hypertarget{appendix:7}{[\textbf{7}]}
\begin{center}
\begin{minipage}{0.47\linewidth}
\begin{lstlisting}[language=Mathematica,columns=fullflexible]
(* Fix a surface*)
a = 516/1000; b = 3/10; e = 1/10

(* Set parameterization *)
z = Sqrt[1 - a (u[v])^2 - e (u[v])^4 - a v^2 - e v^4]/Sqrt[b]

(* Calculate derivatives*)
Su = {1, 0, D[z, u[v]]}
Sv = {0, 1, D[z, v]}
Suu = {0, 0, D[z, {u[v], 2}]}
Svv = {0, 0, D[z, {u[v], 2}]}
Suv = {0, 0, D[z, u[v], v]}

(* Calculate fundamental coefficients*)
Es = Dot[Su, Su]
Gs = Dot[Sv, Sv]
Fs = Dot[Su, Sv]
es = -Cross[Su, Sv].Suu
gs = -Cross[Su, Sv].Svv
fs = -Cross[Su, Sv].Suv
\end{lstlisting} 
\end{minipage}
\qquad
\begin{minipage}{0.47\linewidth}
\begin{lstlisting}[language=Mathematica,columns=fullflexible,firstnumber=21]
X = fs*Es - es*Fs
Y = gs*Es - es*Gs
Z = gs*Fs - fs*Gs

(* Find lines of curvature with machine precision *)
lowsol = NDSolve[{X(u'[v])^2 + Y(u'[v]) + Z == 0, u[0] == 0.7}, u, {v, -0.699999, 0.699999}, SolveDelayed -> True, InterpolationOrder -> All]

(* Solve with higher precision *)
highsol = NDSolve[{X(u'[v])^2 + Y(u'[v]) + Z == 0, u[0] == 0.7}, u, {v, -0.699999, 0.699999}, SolveDelayed -> True, InterpolationOrder -> All, PrecisionGoal -> 20]

(* Plot results *)
Plot[Evaluate[u[v] /. %], {v, -0.699999, 0.699999}, PlotRange -> {{-1, 1}, {-1, 1}}]
\end{lstlisting} 
\end{minipage}
\end{center}

\onehalfspacing
\section*{Acknowledgments}
\noindent I would like to express my sincere gratitude to my mentor, Professor Damin Wu at the University of Connecticut, for introducing me to the research topic and for his valuable guidance and helpful advice. Many thanks to my parents for their wholehearted support, and thanks to my sister, cousins, and entire family for keeping me motivated through the pandemic. \\

\newpage
\printbibliography

\end{document}